\newtheorem{lemma}{Lemma}[section]
\newtheorem{corollary}[lemma]{Corollary}
\newtheorem{theorem}[lemma]{Theorem}
\newtheorem{proposition}[lemma]{Proposition}
\theoremstyle{definition}
\newtheorem{remark}[lemma]{Remark}
\newtheorem{definition}[lemma]{Definition}
\DeclareMathOperator{\Mod}{Mod}
\DeclareMathOperator{\modd}{mod}
\DeclareMathOperator{\Endd}{End}
\DeclareMathOperator{\Add}{Add}
\DeclareMathOperator{\add}{add}
\DeclareMathOperator{\Hom}{Hom}
\DeclareMathOperator{\pd}{pd}
\DeclareMathOperator{\id}{id}
\DeclareMathOperator{\gl.dim}{gl.dim}
\DeclareMathOperator{\Ext}{Ext}
\DeclareMathOperator{\Lim}{Lim}
\DeclareMathOperator{\Sup}{Sup}
\DeclareMathOperator{\Id}{Id}
\newtheorem{question}[lemma]{Question}
\begin{document}

\title{pure semisimple $n$-cluster tilting subcategories}

\author{Ramin Ebrahimi}
\address{Department of Mathematics, University of Isfahan, P.O. Box: 81746-73441, Isfahan, Iran}
\email{ramin69@sci.ui.ac.ir}

\author{Alireza Nasr-Isfahani}
\address{Department of Mathematics, University of Isfahan, P.O. Box: 81746-73441, Isfahan, Iran\\
  and School of Mathematics, Institute for Research in Fundamental Sciences (IPM), P.O. Box: 19395-5746, Tehran, Iran}
\email{nasr$_{-}$a@sci.ui.ac.ir / nasr@ipm.ir}

\subjclass[2010]{{16E30}, {16G10}, {18E99}}

\keywords{$n$-cluster tilting subcategory, pure semisimple, $n$-homological pair, functor category}

\begin{abstract}
From the viewpoint of higher homological algebra, we introduce pure semisimple $n$-abelian categories, which are analogs of pure semisimple abelian categories. Let $\Lambda$ be an Artin algebra and $\mathcal{M}$ be an $n$-cluster tilting subcategory of $\Mod$-$\Lambda$. We show that $\mathcal{M}$ is pure semisimple if and only if each module in $\mathcal{M}$ is a direct sum of finitely generated modules. Let $\mathfrak{m}$ be an $n$-cluster tilting subcategory of $\modd$-$\Lambda$. We show that $\Add(\mathfrak{m})$ is an $n$-cluster tilting subcategory of $\Mod$-$\Lambda$ if and only if $\mathfrak{m}$ has an additive generator if and only if $\Mod(\mathfrak{m})$ is locally finite. This generalizes
Auslander's classical results on pure semisimplicity of Artin algebras.
\end{abstract}

\maketitle


\section{Introduction}
Higher Auslander-Reiten theory was introduced and developed by Iyama \cite{I1, I2}. It deals with $n$-cluster tilting subcategories of
abelian categories, where $n$ is a fixed positive integer. In this subcategories all short exact sequences are split, but there are nice exact sequences with $n+2$ objects. Recently, Jasso by modifying the axioms of abelian categories introduced $n$-abelian categories which are categories inhabited by certain exact sequences with $n+2$ terms, called $n$-exact sequences \cite{J}. $n$-abelian categories are an axiomatization of $n$-cluster tilting subcategories. Jasso shows that any $n$-cluster tilting subcategory of an abelian category is $n$-abelian. Furthermore, he also shows that $n$-abelian categories satisfying certain mild assumptions can be realized as $n$-cluster tilting subcategories of abelian categories. There is also a derived version of the theory focusing on $n$-cluster tilting subcategories of triangulated categories \cite{KR}. These categories were formalized to the theory of $(n+2)$-angulated categories by Geiss et al. \cite{GKO}.

Although there are rich examples of $n$-cluster tilting subcategories, constructing categories
having an $n$-cluster tilting subcategory is one of the main direction of research in this subject and it is a difficult task. On the other hand, since for the case $n=1$ we have ordinary abelian and triangulated categories, it is natural to ask which properties of abelian and triangulated categories can be generalized to the context of $n$-abelian and $n$-angulated categories. For examples of these directions see \cite{HI1, HI2, IO, Jo, HJV}.

Purity for Grothendieck categories was extensively studied by
Simson \cite{S1, S2}. Among other things he showed that a
Grothendieck category $\mathcal{A}$ is pure semisimple if and only
if each object of $\mathcal{A}$ is a direct sum of noetherian
subobjects. In particular case, for a left artinian ring $\Lambda$
the module category $\Mod$-$\Lambda$ is pure semisimple if and only
if each left $\Lambda$-module is isomorphic to a direct sum of
finitely generated modules. It is known that if $\Lambda$ is a
left artinian ring of finite representation type (i.e., there are
only finitely many non-isomorphic finitely generated
indecomposable $\Lambda$-modules) then $\Lambda$ is pure
semisimple. The converse of this fact is yet open, and is known as
the pure semisimplicity conjecture (see \cite{S4}, \cite{S5},
\cite{S3} and \cite{S6}). Nonetheless, Auslander in \cite{Au3}
proved that the pure semisimplicity conjecture is valid for Artin
algebras. He also showed that an Artin algebra $\Lambda$ is of
finite representation type if and only if the functor
$\Hom_{\Lambda}(S,-)$ is finite for each simple object $S$ in
$\modd$-$\Lambda$ if and only if the functor $\Hom_{\Lambda}(X,-)$ is
finite for each $X$ in $\modd$-$\Lambda$ if and only if
$\Mod(\modd$-$\Lambda)$ is locally finite, where $\Mod(\modd$-$\Lambda)$
is the category of all additive covariant functors from
$\modd$-$\Lambda$ to the category of abelian groups.

Let $\mathfrak{m}$ be an $n$-cluster tilting subcategory of $\modd$-$\Lambda$. Herschend et al. called the pair $(\Lambda,\mathfrak{m})$ an $n$-homological pair \cite{HJV}. In this paper we introduce the notion of pure semisimple $n$-homological pairs and prove important results about them (Corollary \ref{firstresult} and Theorem \ref{secondresult}). We say that $(\Lambda,\mathfrak{m})$ is a pure semisimple $n$-homological pair, provided that $\Add(\mathfrak{m})$ is an $n$-cluster tilting subcategory of $\Mod$-$\Lambda$. Section 2 is dedicated to explaining the origin of the name we gave to these objects. More precisely, in section 2, we define pure semisimple $n$-abelian categories and show that an $n$-cluster tilting subcategory $\mathcal{M}$ of $\Mod$-$\Lambda$ is pure semisimple if and only if each module in $\mathcal{M}$ is a direct sum of finitely generated modules. We say that an $n$-homological pair $(\Lambda,\mathfrak{m})$ is of finite type if $\mathfrak{m}$ has an additive generator. We show that an $n$-homological pair $(\Lambda,\mathfrak{m})$ is of finite type if and only if the functor $\Hom_{\mathfrak{m}}(X,-)$ is finite for each $X$ in $\mathfrak{m}$ if and only if $\Hom_{\Lambda}(S,-)$ is a finite object of $\Mod(\mathfrak{m})$ for each simple object $S$ in $\modd$-$\Lambda$ if and only if $\Mod(\mathfrak{m})$ is locally finite. The questions of finiteness and finite generation for $n$-cluster tilting subcategories, which are among the first that have been asked by Iyama \cite{I3}, are still open. Even the Iyama's question: "Dose there exists an $n$-cluster tilting subcategory of a category of finitely generated modules of an Artin algebra with $n\geq 2$ which has infinitely many isomorphism classes of indecomposables?" has no answer yet. In our main result we show that an $n$-homological pair $(\Lambda,\mathfrak{m})$ with $n\geq 2$ is pure semisimple if and only if $(\Lambda,\mathfrak{m})$ is of finite type. It shows that the Iyama's question is equivalent to the following question: Is any $n$-homological pair with $n \geq 2$ pure semisimple?

The paper is organized as follows. In section 2 we recall the definitions of $n$-abelian categories and $n$-cluster tilting subcategories and define purity for compactly generated $n$-abelian categories. Then we show that an $n$-cluster tilting subcategory $\mathcal{M}$ of $\Mod$-$\Lambda$ is pure semisimple if and only if every object of $\mathcal{M}$ is a direct sum of finitely generated objects. In section 3 we give a one direction of the main result, the corollary \ref{firstresult}, which shows that any $n$-homological pair of finite type is pure semisimple. Finally, in the last section we give another direction of the main result, the theorem \ref{secondresult}, which says that any pure semisimple $n$-homological pair is of finite type.

\subsection{Notation}
Throughout this paper $n$ always denotes a fixed positive integer. Let $\Lambda$ be an Artin algebra, we denote by $\Mod$-$\Lambda$ (resp., $\modd$-$\Lambda$) the category of all (resp., finitely generated) left $\Lambda$-modules. For a $\Lambda$-module $M$ we denote by $\pd(M)$ and $\id(M)$ its projective dimension and injective dimension, respectively. Also we denote by $\gl.dim(\Lambda)$ the global dimension of $\Lambda$. In this paper all categories are additive and subcategories are closed under direct summands. Let $\mathcal{C}$ be an additive category and $\mathcal{X}$ be a class of objects in $\mathcal{C}$. We denote by $\Add(\mathcal{X})$ (resp., $\add(\mathcal{X})$) the full subcategory of $\mathcal{C}$ whose objects are direct summands of (resp., finite) direct sums of objects in $\mathcal{X}$. For an additive category $\mathcal{C}$, we denote by $\mathcal{J}_{\mathcal{C}}$ the Jacobson radical of $\mathcal{C}$, where for each $X,Y\in \mathcal{C}$
$$\mathcal{J}_{\mathcal{C}}(X,Y)=\{h:X\rightarrow Y \mid 1_X-gh\;\text{is invertible for any}\;g:Y \rightarrow X \}.$$


\section{Pure semisimple $n$-abelian categories}
In this section we recall the definitions of $n$-abelian categories, $n$-cluster tilting subcategories and $n$-homological pairs. For further information and motivation of definitions the readers are referred to \cite{I1, I2, HJV, J}. Also we define compactly generated $n$-abelian categories and pure semisimplicity for these categories. For an Artin algebra $\Lambda$ we show that an $n$-cluster tilting subcategory $\mathcal{M}$ of $\Mod$-$\Lambda$ is a pure semisimple $n$-abelian category if and only if each objects of $\mathcal{M}$ is isomorphic to a direct sum of finitely generated objects of $\mathcal{M}$.

\subsection{n-Abelian categories}
Let $\mathcal{M}$ be an additive category and $d^0:X^0 \rightarrow X^1$ be a morphism in $\mathcal{M}$. An $n$-cokernel of $d^0$ is a sequence
\begin{equation}
(d^1, \ldots, d^n): X^1 \overset{d^1}{\rightarrow} X^2 \overset{d^2}{\rightarrow}\cdots \overset{d^{n-1}}{\rightarrow} X^n \overset{d^n}{\rightarrow} X^{n+1} \notag
\end{equation}
of objects and morphisms in $\mathcal{M}$ such that for all $Y\in \mathcal{M}$
the induced sequence of abelian groups
\begin{align}
0 \rightarrow \Hom(X^{n+1},Y) \rightarrow \Hom(X^n,Y) \rightarrow\cdots\rightarrow \Hom(X^1,Y) \rightarrow \Hom(X^0,Y) \notag
\end{align}
is exact \cite{J}. The concept of $n$-kernel of a morphism is defined dually.
\begin{definition}$($\cite[Definition 2.4]{J}$)$
Let $\mathcal{M}$ be an additive category. An $n$-exact sequence in $\mathcal{M}$ is a complex

\begin{equation}
X^0 \overset{d^0}{\rightarrow} X^1 \overset{d^1}{\rightarrow} X^2 \overset{d^2}{\rightarrow}\cdots \overset{d^{n-1}}{\rightarrow} X^n \overset{d^n}{\rightarrow} X^{n+1} \notag
\end{equation}
such that $(d^0, \ldots, d^{n-1})$ is an $n$-kernel of $d^n$ and $(d^1, \ldots, d^n)$ is an $n$-cokernel of $d^0$.
\end{definition}

\begin{definition}$($\cite[Definition 3.1]{J}$)$
Let $n$ be a positive integer. An $n$-abelian category is an additive category $\mathcal{M}$ which satisfies the following axioms:
\begin{itemize}
\item[(A0)]
The category $\mathcal{M}$ is idempotent complete.
\item[(A1)]
Every morphism in $\mathcal{M}$ has an $n$-kernel and an $n$-cokernel.
\item[(A2)]
For every monomorphism $d^0:X^0 \rightarrow X^1$ in $\mathcal{M}$ and for every $n$-cokernel $(d^1, \ldots, d^n)$ of $d^0$, the  following sequence is $n$-exact:
\begin{equation}
X^0 \overset{d^0}{\rightarrow} X^1 \overset{d^1}{\rightarrow} X^2 \overset{d^2}{\rightarrow}\cdots \overset{d^{n-1}}{\rightarrow} X^n \overset{d^n}{\rightarrow} X^{n+1} \notag
\end{equation}
\item[(A3)]
For every epimorphism $d^n:X^n \rightarrow X^{n+1}$ in $\mathcal{M}$ and for every $n$-kernel $(d^0, \ldots, d^{n-1})$ of $d^n$, the  following sequence is $n$-exact:
\begin{equation}
X^0 \overset{d^0}{\rightarrow} X^1 \overset{d^1}{\rightarrow} X^2 \overset{d^2}{\rightarrow}\cdots \overset{d^{n-1}}{\rightarrow} X^n \overset{d^n}{\rightarrow} X^{n+1} \notag
\end{equation}
\end{itemize}
\end{definition}

Motivated by the definition of compact objects in abelian categories \cite[Definition 18]{Mu}, we give the following definition.

\begin{definition}
Let $\mathcal{M}$ be an additive category with arbitrary direct sum. We call an object $X \in \mathcal{M}$ a compact object if any morphism from $X$ to a nonempty coproduct $\oplus_{i\in I} X_i$ factors through some
finite subcoproduct $\oplus_{i=1}^k X_i$. We say that $\mathcal{M}$ is compactly generated if for every $M\in \mathcal{M}$ there is an epimorphism $h:\oplus_{i\in I} X_i \rightarrow M$ where $X_i$ is compact for each $i\in I$.
\end{definition}

Now we are ready to define pure semisimple $n$-abelian categories.

\begin{definition}
Let $\mathcal{M}$ be a compactly generated $n$-abelian category.
\begin{itemize}
\item[(i)]
We say that an $n$-exact sequence
$$X^0 \rightarrow X^1 \rightarrow X^2 \rightarrow\cdots \rightarrow X^n \rightarrow X^{n+1}$$ is pure $n$-exact if for every compact object $Y,$ the induced sequence of abelian groups
$$
0 \rightarrow \Hom(Y, X^0) \rightarrow \Hom(Y, X^1) \rightarrow\cdots\rightarrow \Hom(Y, X^n) \rightarrow \Hom(Y, X^{n+1}) \rightarrow 0 \notag
$$
is exact. In this case we say that $X^0\rightarrow X^1$ is a pure monomorphism and $X^n\rightarrow X^{n+1}$ is a pure epimorphism. An object $P$ of $\mathcal{M}$ is called pure projective if for every pure $n$-exact sequence $X^0 \rightarrow X^1 \rightarrow X^2 \rightarrow\cdots \rightarrow X^n \rightarrow X^{n+1}$ the induced sequence of abelian groups
\begin{align}
0 \rightarrow \Hom(P, X^0) \rightarrow \Hom(P, X^1) \rightarrow\cdots\rightarrow \Hom(P, X^n) \rightarrow \Hom(P, X^{n+1}) \rightarrow 0 \notag
\end{align}
is exact.
\item[(ii)] We say that $\mathcal{M}$ is
pure semisimple if all objects of $\mathcal{M}$ are pure projective.
\end{itemize}
\end{definition}

\begin{remark}
Let $\mathcal{M}$ be a compactly generated $n$-abelian category and $P$ be an object of $\mathcal{M}$. It is easy to see that $P$ is pure projective if and only if for every pure epimorphism $f:X^n\rightarrow X^{n+1}$ and every morphism $g:P\rightarrow X^{n+1}$ there exists $\tilde{f}:P\rightarrow X^n$ such that the following diagram is commutative:
\begin{center}
\begin{tikzpicture}
\node (X1) at (1,1) {$P$};
\node (X2) at (1,-1) {$X^{n+1}$};
\node (X3) at (-1,-1) {$X^n$};
\draw [->,thick] (X1) -- (X2) node [midway,right] {$g$};
\draw [->,thick] (X3) -- (X2) node [midway,above] {$f$};
\draw [->,thick,dotted] (X1) -- (X3) node [midway,above] {$\tilde{f}$};
\end{tikzpicture}
\end{center}
\end{remark}

\subsection{Pure semisimplicity of $n$-cluster tilting subcategories }

In this subsection we first recall the definition of $n$-cluster tilting subcategories and then we give a characterization of pure semisimple $n$-cluster tilting subcategories.

Let $\mathcal{A}$ be an additive category. A subcategory $\mathcal{M}\subseteq \mathcal{A}$ is called contravariantly finite if for every $A\in \mathcal{A}$ there exist an object $M\in \mathcal{M}$ and a morphism $f:M \rightarrow A$ such that for each $N\in \mathcal{M}$ the sequence of abelian groups
$$\Hom(N,M) \rightarrow \Hom(N,A)\rightarrow 0$$
is exact. Such a morphism $f$ is called a right $\mathcal{M}$-approximation of $A$. The notion of covariantly finite subcategory and left $\mathcal{M}$-approximation is defined dually. A functorially finite subcategory of $\mathcal{A}$ is a subcategory which is both covariantly and contravariantly finite in $\mathcal{A}$ \cite{AS}.

Recall that a subcategory $\mathcal{M}$ of an abelian category $\mathcal{A}$ is called generating if for every object $X\in \mathcal{A}$ there exist an object $Y\in \mathcal{M}$ and an epimorphism $Y\rightarrow X$. The concept of cogenerating subcategory is defined dually.

\begin{definition}$($\cite[Definition 3.14]{J}$)$
Let $\mathcal{A}$ be an abelian category and $\mathcal{M}$ be a generating-cogenerating full subcategory of $\mathcal{A}$. $\mathcal{M}$ is called an $n$-cluster tilting subcategory of $\mathcal{A}$ if $\mathcal{M}$ is functorially finite in $\mathcal{A}$ and
\begin{align}
\mathcal{M}& = \{ X\in \mathcal{A} \mid \forall i\in \{1, \ldots, n-1 \}, \Ext^i(X,\mathcal{M})=0 \}\notag \\
                  & =\{ X\in \mathcal{A} \mid \forall i\in \{1, \ldots, n-1 \}, \Ext^i(\mathcal{M},X)=0 \}.\notag
\end{align}

Note that $\mathcal{A}$ itself is the unique 1-cluster tilting subcategory of $\mathcal{A}$.

\end{definition}
\begin{remark} {\rm Let $\mathcal{A}$ be an abelian category and $\mathcal{M}$ be an $n$-cluster tilting subcategory of $\mathcal{A}$. Since $\mathcal{M}$ is a generating-cogenerating subcategory of $\mathcal{A}$, for each $A\in \mathcal{A}$, every left $\mathcal{M}$-approximation of $A$ is a monomorphism and every right $\mathcal{M}$-approximation of $A$ is an epimorphism.}
\end{remark}

The following result gives a rich source of $n$-abelian categories.
\begin{theorem}$($\cite[Theorem 3.16]{J}$)$
Let $\mathcal{A}$ be an abelian category and $\mathcal{M}$ be an $n$-cluster tilting subcategory of $\mathcal{A}$. Then $\mathcal{M}$ is an $n$-abelian category.
\end{theorem}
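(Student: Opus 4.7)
The plan is to verify the four axioms (A0)--(A3) in turn, leveraging the functorial finiteness of $\mathcal{M}$ in $\mathcal{A}$ together with the $\Ext$-vanishing characterisation of $\mathcal{M}$. Axiom (A0) is immediate: $\mathcal{A}$ is abelian and hence idempotent complete, and by our standing convention subcategories are closed under direct summands.

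For (A1) I would construct an $n$-cokernel of a morphism $d^0\colon X^0\to X^1$ in $\mathcal{M}$ as follows. Let $C_0:=\mathrm{coker}(d^0)$ in $\mathcal{A}$. Since $\mathcal{M}$ is cogenerating and covariantly finite, pick a left $\mathcal{M}$-approximation $C_0\hookrightarrow X^2$, which is monic by the preceding remark. Set $C_1:=X^2/C_0$ and iterate, obtaining approximations $C_i\hookrightarrow X^{i+2}$ with $C_{i+1}:=X^{i+2}/C_i$ for $i=0,1,\ldots,n-2$. Splicing produces the complex
$$X^0\xrightarrow{d^0}X^1\xrightarrow{d^1}X^2\to\cdots\to X^{n+1}$$
which is exact in $\mathcal{A}$ at each of $X^1,X^2,\ldots,X^n$. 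To see it is an $n$-cokernel I would apply $\Hom_{\mathcal{A}}(-,Y)$ for $Y\in\mathcal{M}$ and dimension-shift along the short exact sequences $0\to C_i\to X^{i+2}\to C_{i+1}\to 0$, using $\Ext^j_{\mathcal{A}}(\mathcal{M},Y)=0$ for $1\le j\le n-1$ (valid because $Y\in\mathcal{M}$) to collapse the resulting long exact $\Ext$-sequences into exactness of the $\Hom$-complex. The $n$-kernel is obtained dually from $\ker(d^0)$ by iterated right $\mathcal{M}$-approximations, using the generating property of $\mathcal{M}$.

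For (A2) and (A3), I would extend the construction. If $d^0$ is additionally a monomorphism, then the spliced sequence above is exact in $\mathcal{A}$ already at $X^0$. Applying $\Hom_{\mathcal{A}}(Y,-)$ for $Y\in\mathcal{M}$ and invoking the dual vanishing $\Ext^j_{\mathcal{A}}(Y,\mathcal{M})=0$ then shows that $(d^0,\ldots,d^{n-1})$ is an $n$-kernel of $d^n$, so the whole sequence is $n$-exact. To handle an \emph{arbitrary} $n$-cokernel of $d^0$ rather than the specific one produced by approximations, I would argue by constructing a comparison morphism between any two $n$-cokernels of $d^0$ and transporting the $n$-exactness along it; axiom (A3) is proved dually.

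The hard part will be the dimension-shifting bookkeeping: organising the long exact $\Ext$-sequences coming from $0\to C_i\to X^{i+2}\to C_{i+1}\to 0$ and confirming that the two-sided vanishing of $\Ext^j_{\mathcal{A}}(\mathcal{M},\mathcal{M})$ for $1\le j\le n-1$ propagates so that the relevant $\Hom$-complex is exact in every degree. A secondary technicality is the comparison argument between different choices of $n$-(co)kernels needed to verify (A2) and (A3) for an arbitrary $n$-(co)kernel, rather than only for the canonical one built from approximations.
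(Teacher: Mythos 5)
Note first that the paper offers no proof of this statement: it is quoted directly from Jasso \cite[Theorem 3.16]{J}, so the only meaningful comparison is with Jasso's original argument. Your sketch follows that argument in all essentials: (A0) from idempotent completeness of $\mathcal{A}$ and closure of $\mathcal{M}$ under summands; (A1) by splicing iterated left (resp.\ right) $\mathcal{M}$-approximations of the cokernel (resp.\ kernel); and (A2)/(A3) by establishing $n$-exactness for the canonical $n$-(co)kernel and transporting it to an arbitrary one via comparison maps, which induce homotopy equivalences of the $\Hom$-complexes and hence preserve their exactness. That overall strategy is correct.

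There is, however, one substantive step missing from your construction of the $n$-cokernel. With $C_0=\operatorname{coker}(d^0)$ and approximations $C_i\hookrightarrow X^{i+2}$ for $i=0,\dots,n-2$, the last term is forced to be $X^{n+1}=C_{n-1}=X^n/C_{n-2}$ itself --- it cannot be a further approximation of $C_{n-1}$, since then $\Hom(X^{n+1},Y)\to\Hom(X^n,Y)$ would fail to be injective --- and you must prove that $C_{n-1}$ lies in $\mathcal{M}$, otherwise you have not produced a complex in $\mathcal{M}$ at all. This is precisely where the defining \emph{equality} $\mathcal{M}=\{X\mid \Ext^i(X,\mathcal{M})=0,\ 1\le i\le n-1\}$, and not merely the containment, is used: dimension shifting along $0\to C_i\to X^{i+2}\to C_{i+1}\to 0$ gives $\Ext^j(C_{n-1},Y)\simeq\Ext^1(C_{n-j},Y)$ for $Y\in\mathcal{M}$ and $1\le j\le n-1$, and the latter vanishes because $C_{n-j-1}\hookrightarrow X^{n-j+1}$ is a left $\mathcal{M}$-approximation. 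Relatedly, your ``dimension-shifting bookkeeping'' is aimed at the exactness of the contravariant $\Hom$-complex, but that exactness already follows from the approximation property together with exactness in $\mathcal{A}$; the genuine use of the two-sided $\Ext$-vanishing is the membership $C_{n-1}\in\mathcal{M}$ just described (and its dual for the first term of the $n$-kernel), plus the covariant $\Hom$-exactness needed in (A2). A final small point: in (A2) you should justify that a monomorphism of the category $\mathcal{M}$ is a monomorphism in $\mathcal{A}$; this holds because $\mathcal{M}$ is generating.
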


\begin{lemma}
Let $\mathcal{M}$ be an $n$-cluster tilting subcategory of $\Mod$-$\Lambda$ and $\mathcal{M}_0$ be the subcategory of all compact objects in $\mathcal{M}$. Then we have $\mathcal{M}_0=\mathcal{M}\cap \modd$-$\Lambda$, and $\mathcal{M}$ is compactly generated.
\begin{proof}
First we note that $\mathcal{M}$ is closed under arbitrary direct sums, because the functor $\Ext^i(-,X)$ commute with direct sums.
It is obvious that $\mathcal{M}\cap \modd$-$\Lambda \subseteq \mathcal{M}_0$, for the converse inclusion let $M_0$ be a compact object in $\mathcal{M}$ which is not finitely generated. Since $M_0$ is not compact in $\Mod$-$\Lambda$, there exists a morphism $M_0 \rightarrow \oplus_{i\in I} X_i$ which dose not factor through a finite subcoproduct. If for each $i\in I$ we choose a left $\mathcal{M}$-approximation $X_i\rightarrow M_i$, then it is easy to see that the composition $M_0 \rightarrow \oplus_{i\in I} X_i\rightarrow \oplus_{i\in I} M_i$ dose not factor through a finite subcoproduct, which gives a contradiction. For the last part we note that the projective module $\Lambda\in \mathcal{M}_0$ and the assertion follows.
\end{proof}
\end{lemma}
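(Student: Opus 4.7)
The plan is to establish three facts in sequence: (1) $\mathcal{M}$ is closed under arbitrary coproducts in $\Mod$-$\Lambda$; (2) $\mathcal{M}_0 = \mathcal{M} \cap \modd$-$\Lambda$; and (3) the regular module $\Lambda$ lies in $\mathcal{M}_0$, from which compact generation is immediate. For (1) I will invoke the $\Ext$-characterization $\mathcal{M}=\{X\mid \Ext^i(X,\mathcal{M})=0,\ 1\le i\le n-1\}$ together with the natural isomorphism $\Ext^i(\bigoplus_j X_j,N)\cong \prod_j \Ext^i(X_j,N)$, which yields stability of $\mathcal{M}$ under arbitrary coproducts at once.

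For the inclusion $\mathcal{M}\cap \modd$-$\Lambda\subseteq \mathcal{M}_0$, I will simply recall that a finitely generated $\Lambda$-module is compact in $\Mod$-$\Lambda$, so any morphism into a coproduct factors through a finite subcoproduct in $\Mod$-$\Lambda$; since finite subsums of objects of $\mathcal{M}$ remain in $\mathcal{M}$ and $\mathcal{M}$ is full in $\Mod$-$\Lambda$, such a factorization is automatically valid inside $\mathcal{M}$. The reverse inclusion is the main content, and I will argue it by contradiction. Supposing $M_0\in\mathcal{M}_0$ is not finitely generated, it fails to be compact in $\Mod$-$\Lambda$, so there is a morphism $f:M_0\rightarrow \bigoplus_{i\in I}X_i$ in $\Mod$-$\Lambda$ with no factorization through a finite subsum. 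Using that $\mathcal{M}$ is covariantly finite in $\Mod$-$\Lambda$, pick a left $\mathcal{M}$-approximation $\iota_i:X_i\rightarrow M_i$ for each $i$; by the remark recalled above, each $\iota_i$ is a monomorphism. The composite $g:=\bigl(\bigoplus_i \iota_i\bigr)\circ f:M_0\rightarrow \bigoplus_i M_i$ is a morphism in $\mathcal{M}$ by (1). If $g$ factored through some finite $\bigoplus_{i\in F}M_i$, then the component $M_0\rightarrow M_j$ would vanish for every $j\notin F$, forcing $M_0\rightarrow X_j$ to vanish as well (because $\iota_j$ is mono), and hence $f$ would factor through $\bigoplus_{i\in F}X_i$, a contradiction. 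Thus $g$ contradicts compactness of $M_0$ in $\mathcal{M}$.

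Finally, $\Lambda$ is finitely generated, hence compact in $\Mod$-$\Lambda$, and it lies in $\mathcal{M}$: since $\mathcal{M}$ is generating there is an epi $N\twoheadrightarrow \Lambda$ with $N\in\mathcal{M}$; this splits because $\Lambda$ is projective, so $\Lambda$ is a summand of $N$ and thus lies in $\mathcal{M}$. Therefore $\Lambda\in\mathcal{M}_0$, and for any $M\in\mathcal{M}$ a free presentation $\Lambda^{(I)}\twoheadrightarrow M$ provides the desired epimorphism from a coproduct of compact objects.

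The main obstacle is the reverse inclusion in (2): the non-compactness of $M_0$ in $\Mod$-$\Lambda$ only supplies a bad morphism into an arbitrary coproduct, and one must rebuild that morphism inside $\mathcal{M}$ without losing the non-factoring property. The key lever enabling this is that left $\mathcal{M}$-approximations are monomorphisms, which allows the obstruction to be transported componentwise from the ambient category into $\mathcal{M}$.
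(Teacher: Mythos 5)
Your proposal is correct and follows essentially the same route as the paper: closure of $\mathcal{M}$ under coproducts via the $\Ext$-characterization, the contradiction argument composing the non-factoring map with left $\mathcal{M}$-approximations, and $\Lambda\in\mathcal{M}_0$ for compact generation. The only difference is that you spell out the step the paper labels ``easy to see'' (using that left approximations are monomorphisms to transport the non-factoring property), which is a welcome clarification but not a change of method.
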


Now we are ready to state the main theorem of this section.

\begin{theorem}\label{theorem1}
Let $\mathcal{M}$ be an $n$-cluster tilting subcategory of $\Mod$-$\Lambda$, then $\mathcal{M}$ is pure semisimple if and only if each module in $\mathcal{M}$ is a direct sum of finitely generated modules.
\end{theorem}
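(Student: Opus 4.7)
The plan is to prove the two implications separately, using the preceding lemma to identify $\mathcal{M}_0 := \mathcal{M} \cap \modd$-$\Lambda$ as the subcategory of compact objects of $\mathcal{M}$. For $(\Leftarrow)$, I first show that each $N \in \mathcal{M}_0$ is pure projective: for any pure $n$-exact sequence, applying $\Hom(N,-)$ produces an exact sequence, since the initial exactness is part of $n$-exactness (the condition that $(d^0,\ldots,d^{n-1})$ be an $n$-kernel of $d^n$), while the final surjection follows from the definition of purity applied to the compact object $N$. Pure projectives are closed under arbitrary coproducts because $\Hom(\bigoplus P_i,-)\cong \prod \Hom(P_i,-)$ and a product of surjections is a surjection. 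Combining these two facts, every direct sum of objects of $\mathcal{M}_0$ is pure projective, so $\mathcal{M}$ is pure semisimple.

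For $(\Rightarrow)$, given $M \in \mathcal{M}$ the strategy is to build a pure $n$-exact sequence $X^0 \to \cdots \to X^n \xrightarrow{p} M$ whose penultimate term $X^n$ is a coproduct of objects of $\mathcal{M}_0$, and then to split $p$ using the pure projectivity of $M$. Let $\mathcal{S}$ be a set of representatives of the isomorphism classes in $\mathcal{M}_0$; set $X^n := \bigoplus_{N \in \mathcal{S},\, f \in \Hom(N,M)} N_f$ and let $p$ be the evaluation map. Since $\mathcal{M}$ is compactly generated by the preceding lemma, some epimorphism onto $M$ from a coproduct of compact objects exists, and that map factors through $p$, so $p$ is itself an epimorphism in $\mathcal{M}$. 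Axiom (A3) then provides an $n$-kernel of $p$, yielding an $n$-exact sequence ending at $p$. Purity reduces to surjectivity of $\Hom(Y,X^n) \to \Hom(Y,M)$ for each $Y \in \mathcal{M}_0$, which holds by construction: any $g\colon Y \to M$ (with $Y \cong N \in \mathcal{S}$) factors through the $(N,g)$-indexed summand of $X^n$.

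Pure semisimplicity now forces $M$ to be pure projective, so by the Remark the epimorphism $p$ splits and $M$ is a direct summand of $X^n$. Each summand of $X^n$ lies in $\mathcal{M}_0 \subseteq \modd$-$\Lambda$ and thus, by the classical Krull-Schmidt theorem for finitely generated modules over an Artin algebra, decomposes into a finite direct sum of indecomposables with local endomorphism rings; hence $X^n$ is itself a direct sum of modules with local endomorphism rings. The Crawley--J\'onsson--Warfield theorem (any direct summand of such a direct sum is again a direct sum of modules isomorphic to summands appearing in the original decomposition) then exhibits $M$ as a direct sum of finitely generated indecomposables, completing the argument. I expect the principal obstacle to be the purity verification in the second paragraph, which requires reconciling the middle exactness built into the $n$-kernel condition of axiom (A3) with the universal factorization property of the coproduct $X^n$; the concluding invocation of Crawley--J\'onsson--Warfield is a standard piece of external input once the splitting of $p$ is established.
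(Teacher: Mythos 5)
Your proposal is correct, and the overall skeleton --- construct a pure $n$-exact sequence ending in an epimorphism $p\colon \oplus_{N,f} N_f \to M$ from a coproduct of finitely generated modules, then split $p$ by pure projectivity --- is the same as the paper's; indeed you supply more detail than the paper does for the existence and purity of this sequence, and for the (essentially tautological) converse direction. Where you genuinely diverge is in how the splitting is converted into a decomposition of $M$. The paper stays self-contained: it first runs a radical argument (writing $1_{\oplus X_i}=gf+\alpha\beta$ and using that each $\Endd_\Lambda(X_i)$ is local) to extract a \emph{single} finitely generated indecomposable direct summand of $M$, and then its Lemma \ref{lemmax} bootstraps this, via Zorn's lemma and the fact that direct limits preserve sections, into a full direct-sum decomposition. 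You instead invoke the Crawley--J\'onsson--Warfield theorem: $X^n$ is a direct sum of finitely generated (hence countably generated) indecomposables with local endomorphism rings, so every direct summand of $X^n$ --- in particular $M$ --- is again a direct sum of copies of those indecomposables. This is a legitimate and noticeably shorter finish, at the cost of importing a nontrivial external theorem; the paper's route buys independence from that machinery (and its Lemma \ref{lemmax} only needs the weaker input that every module in $\mathcal{M}$ has \emph{some} finitely generated summand), while yours avoids both the maximality argument and the local-ring computation entirely.
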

For the proof of the above theorem we need the following lemma.

\begin{lemma}\label{lemmax}
Let $\mathcal{M}$ be an $n$-cluster tilting subcategory of $\Mod$-$\Lambda$ such that each module in $\mathcal{M}$ has a finitely generated direct summand. Then each module in $\mathcal{M}$ is isomorphic to a direct sum of finitely generated modules.
\begin{proof}
Let $M_0=M\in \mathcal{M}$ be an arbitrary module. By assumption there is a finitely generated module $X_1$ and a module $M_1$ such that $M\simeq X_1\oplus M_1$. Inductively we can choose a family of finitely generated modules $\{X_i\}_{i\geq 1}$ and a family of modules $\{M_i\}_{i\geq 1}$ such that $M_i\simeq X_{i+1}\oplus M_{i+1}$ for each $i \geq 0$. We claim that $\oplus_{i\geq 1}X_i$ is a direct summand of $M$. By construction, $\oplus_{i=1}^k X_i$ is a direct summand of $M$. Consider a direct system $\{Y_k=\oplus_{i=1}^k X_i \}_{k\geq 1}$ with the obvious inclusion maps. It is clear that the direct system $\{Y_k \}_{k\geq 1}$ is a direct summand of the direct system $\{M\}_{i\geq 1}$ which all maps are identity. Since $\underrightarrow{\Lim}$ as a functor preserve section maps, $\underrightarrow{\Lim}Y_k\simeq \oplus_{i\geq 1}X_i$ is a direct summand of $M$. By the above argument and using Zorn's lemma we can choose a family of finitely generated modules $\{X_i\}_{i\in I}$ such that $\oplus_{i\in I}X_i$ is a direct summand of $M$ and it is maximal with this property. Thus there exists a module $N$ such that $M\simeq (\oplus_{i\in I}X_i)\oplus N$. By assumption $N$ has a finitely generated direct summand $N_0$. Therefore $(\oplus_{i\in I}X_i)\oplus N_0$ is a direct summand of $M$ which give a contradiction. Thus $M\simeq \oplus_{i\in I}X_i$.
\end{proof}
\end{lemma}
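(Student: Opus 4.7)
The plan is to extract a countable sequence of finitely generated direct summands from $M$, show that their direct sum splits off, and then bootstrap via Zorn's lemma to exhaust $M$. Starting from $M_0 = M$, the hypothesis gives $M_0 \cong X_1 \oplus M_1$ with $X_1$ finitely generated. Since $\mathcal{M}$ is closed under direct summands, $M_1 \in \mathcal{M}$ and the hypothesis applies again; iterating, I obtain $M \cong Y_k \oplus M_k$ for every $k \geq 1$, where $Y_k = \bigoplus_{i=1}^k X_i$ and each $M_k \in \mathcal{M}$.

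The first core step is to verify that $\bigoplus_{i \geq 1} X_i$ is itself a direct summand of $M$. I would organize $\{Y_k\}_{k \geq 1}$ as a direct system via the obvious inclusions $Y_k \hookrightarrow Y_{k+1}$ and regard $M$ as the constant direct system; the inclusions $Y_k \hookrightarrow M$ then form a morphism of direct systems. If the level-wise retractions $M \to Y_k$ can be chosen compatibly with the transition maps on both sides, the pair becomes a split monomorphism of direct systems, and since $\underrightarrow{\Lim}$ is a functor it preserves split monomorphisms, whence $\underrightarrow{\Lim} Y_k \cong \bigoplus_{i \geq 1} X_i$ is a direct summand of $\underrightarrow{\Lim}\{M\} = M$.

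With this colimit step available, I would run Zorn's lemma on the poset $\mathcal{P}$ of families $\{X_i\}_{i \in I}$ of finitely generated modules in $\mathcal{M}$ whose direct sum is a direct summand of $M$, ordered by inclusion of families. Upper bounds for chains are produced by a transfinite version of the same colimit argument. Take a maximal element and write $M \cong (\bigoplus_{i \in I} X_i) \oplus N$; since $N$ is a direct summand of $M \in \mathcal{M}$, we have $N \in \mathcal{M}$, so if $N \neq 0$ the hypothesis yields a finitely generated direct summand $N_0$ of $N$, and adjoining $N_0$ to $\{X_i\}_{i \in I}$ strictly enlarges the family, contradicting maximality. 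Therefore $N = 0$ and $M \cong \bigoplus_{i \in I} X_i$.

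The delicate point I foresee is the splitting step. The obvious projections $p_k \colon M \twoheadrightarrow Y_k$ along the chosen $M_k$ are surjective, so they cannot satisfy $p_{k+1} = i_k \circ p_k$, since this compatibility would force the image of $p_{k+1}$ to lie in $Y_k \subsetneq Y_{k+1}$. Consequently one must either make the iterative decompositions $M \cong Y_k \oplus M_k$ mesh coherently so that genuinely compatible retractions exist, or construct a retraction $M \to \bigoplus_{i \geq 1} X_i$ by assembling coordinatewise projections and verifying that the resulting map lands in the direct sum rather than the direct product. This is the step where the $n$-cluster tilting structure, and in particular the closure of $\mathcal{M}$ under arbitrary direct sums, should play the essential role.
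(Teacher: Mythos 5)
Your proposal retraces the paper's proof step for step, and the ``delicate point'' you isolate at the end is exactly where the printed argument is thinnest; but since you leave that point unresolved, what you have is not yet a proof. Your observation is correct: retractions $p_k\colon M\to Y_k$ compatible with the transition maps of the two direct systems cannot exist, since compatibility would force the image of $p_{k+1}$ into $Y_k$ while $p_{k+1}$ must be onto $Y_{k+1}$. More seriously, the step cannot be completed by any purely formal $\underrightarrow{\Lim}$ argument, because the union of an increasing chain of direct summands of a module need not be a direct summand: in $\mathbb{Z}^{\mathbb{N}}$ each finite subproduct $\mathbb{Z}^{k}$ is a summand, but the union $\mathbb{Z}^{(\mathbb{N})}$ is not. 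Thus knowing only that $M\simeq Y_k\oplus M_k$ for every $k$ (even with the nested choice of complements $M_k\supseteq M_{k+1}$) does not yield that $\oplus_{i\geq 1}X_i$ splits off, and both of your suggested repairs fail in this example for the same reason: the coordinatewise projections assemble to a map landing in the product, not the direct sum. Some input specific to the situation (the Artin algebra, the structure of the $X_i$) is genuinely needed, and the same unaddressed issue recurs in your ``transfinite version'' of the argument producing upper bounds for chains in the Zorn poset.

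To be fair, the paper's own proof disposes of this step with ``it is clear that the direct system $\{Y_k\}$ is a direct summand of the direct system $\{M\}$,'' which is open to the identical objection, so you have located a real soft spot rather than invented one. A clean way to close the gap --- and in fact to make Lemma \ref{lemmax} dispensable for the proof of Theorem \ref{theorem1} --- is to use decomposition theory instead: every finitely generated module over an Artin algebra is a finite direct sum of modules with local endomorphism rings, and by the Crawley--J\'onsson--Warfield theorem a direct summand of a direct sum of countably generated modules with local endomorphism rings is again a direct sum of such modules. Since a pure projective object of $\mathcal{M}$ is a direct summand of a direct sum of finitely generated indecomposables, this yields the desired decomposition without ever forming unions of chains of summands.
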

Now we are ready to prove the theorem \ref{theorem1}.

\begin{proof}[Proof of Theorem \ref{theorem1}]
By the lemma \ref{lemmax}, it is enough to show that every module in $\mathcal{M}$ has a finitely generated direct summand. Because $\mathcal{M}$ is closed under direct sums, for every module $M\in \mathcal{M}$ we have a pure epimorphism $f:\oplus_{i\in I}X_i \rightarrow M$, where $X_i$ is a finitely generated indecomposable $\Lambda$-module for each $i\in I$. Since $M$ is pure projective, $f$ is a retraction. So we have an split short exact sequence
$$0\rightarrow N\overset{\alpha}{\longrightarrow} \oplus_{i\in I}X_i \overset{f}{\longrightarrow} M\rightarrow 0.$$
Let $g=(g_i)_{i\in I}:M \rightarrow \oplus_{i\in I}X_i$ be a section of $f$ and $\beta:\oplus_{i\in I}X_i\rightarrow N$ be a retraction of $\alpha$. We claim that there exists $i\in I$ such that $f_i:X_i\rightarrow M$ is a section. Assume that for each $i\in I$, $f_i:X_i\rightarrow M$ is not a section. Since $\Endd_{\Lambda}(X_i)$ is a local ring, $g_if_i:X_i\rightarrow X_i\in\mathcal{J}(X_i,X_i)$. On the other hand we have that $1_{\oplus_{i\in I}X_i}=gf+\alpha \beta$. Thus $1_{X_i}=g_if_i+\alpha_i \beta_i$ for each $i\in I$. Since $g_if_i\in \mathcal{J}(X_i,X_i)$, $\alpha_i \beta_i$ is an isomorphism. Thus $\alpha \beta$ is an isomorphism which is a contradiction.
\end{proof}

\begin{definition}$($\cite[Definition 2.9]{HJV}$)$
Let $\Lambda$ be an Artin algebra and $\mathfrak{m}$ be an $n$-cluster tilting subcategory of $\modd$-$\Lambda$. Then $(\Lambda,\mathfrak{m})$ is called an $n$-homological pair.
\end{definition}

\begin{definition}
We say that an $n$-homological pair $(\Lambda,\mathfrak{m})$ is pure semisimple if $\Add(\mathfrak{m})$ is an $n$-cluster tilting subcategory of $\Mod$-$\Lambda$. Also we say that an $n$-homological pair $(\Lambda,\mathfrak{m})$ is of finite type if $\mathfrak{m}$ has an additive generator.
\end{definition}

\section{$n$-homological pairs of finite type are pure semisimple}

In this section we show that if $(\Lambda,\mathfrak{m})$ be an $n$-homological pair of finite type, then $\Add(\mathfrak{m})$ is an $n$-cluster tilting subcategory of $\Mod$-$\Lambda$ which is pure semisimple by the theorem \ref{theorem1}. This shows that any $n$-homological pair of finite type is pure semisimple.

We recall some well known results that we will need in the rest of the paper.

\begin{lemma}\label{lemma2}
Let $\Lambda$ be an Artin algebra, then
$$\Sup\{\pd(X)\mid X\in \Mod-\Lambda\}=\Sup\{\pd(X)\mid X\in \modd-\Lambda\}$$
\begin{proof}
See for example the theorem 4.1.2 of \cite{We}.
\end{proof}
\end{lemma}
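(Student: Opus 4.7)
The inequality $\Sup\{\pd(X)\mid X\in \modd\text{-}\Lambda\}\le \Sup\{\pd(X)\mid X\in \Mod\text{-}\Lambda\}$ is automatic from $\modd\text{-}\Lambda\subseteq\Mod\text{-}\Lambda$, so the content is the reverse inequality. Setting $n:=\Sup\{\pd(X)\mid X\in \modd\text{-}\Lambda\}$, I may as well assume $n<\infty$, since otherwise there is nothing to prove. My goal is then to show $\gl.dim(\Lambda)\le n$.

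The cleanest route is to invoke Auslander's classical characterisation of global dimension (the result cited as Weibel, Theorem 4.1.2): for an arbitrary ring $R$,
\[
\gl.dim(R)\;=\;\Sup\bigl\{\pd(R/I)\,\bigm|\,I\ \text{a left ideal of }R\bigr\}.
\]
I apply this with $R=\Lambda$. Because $\Lambda$ is an Artin algebra it is in particular left Noetherian, so every left ideal $I\subseteq\Lambda$ is finitely generated and hence $\Lambda/I\in\modd\text{-}\Lambda$. By the definition of $n$ this forces $\pd(\Lambda/I)\le n$ for every $I$, and Auslander's formula then yields $\gl.dim(\Lambda)\le n$. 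Since $\Sup\{\pd(X)\mid X\in\Mod\text{-}\Lambda\}=\gl.dim(\Lambda)$, this is the required inequality and the two suprema coincide.

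If one wanted a self-contained argument that avoids the citation, the main obstacle would be verifying directly that $\Ext^{n+1}_\Lambda(X,N)=0$ for every $X,N\in\Mod\text{-}\Lambda$. The approach would be: reduce to showing $\Ext^{n+1}_\Lambda(\Lambda/I,N)=0$ for each left ideal $I$, which follows from the hypothesis since $\Lambda/I$ is finitely generated; then bootstrap from cyclic modules to arbitrary $X$ by a transfinite/Baer-type argument, using the long exact sequence together with the fact that over a left Noetherian ring one can build a well-ordered filtration of any module by submodules whose successive quotients are cyclic. The Noetherian hypothesis, i.e.\ that $\Lambda/I$ is always finitely generated, is the single essential ingredient and is what fails for a general (non-Noetherian) ring.
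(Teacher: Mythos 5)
Your proof is correct and takes essentially the same route as the paper: the paper's proof consists only of the citation of Weibel's Theorem 4.1.2 (the Global Dimension Theorem, which identifies the global dimension with $\Sup\{\pd(\Lambda/I)\mid I\ \text{a left ideal of}\ \Lambda\}$), and you invoke exactly that result together with the observation that each $\Lambda/I$ is finitely generated because $\Lambda$ is Noetherian.
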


\begin{lemma}\label{lemma4}$($\cite[Proposition 2.4.1]{I2}$)$
Let $\mathcal{A}$ be an abelian category with enough projectives and injectives and $\mathcal{P}$ be the full subcategory of all projectives. A functorially finite subcategory $\mathcal{C}\subseteq \mathcal{A}$ is $n$-cluster tilting subcategory if and only if $\mathcal{P}\subseteq \mathcal{C}$ and
$$\mathcal{C}=\{X\in \mathcal{A}\mid \forall i\in \{1, \ldots, n-1 \}, \Ext^i(\mathcal{M},X)=0 \}.$$
\end{lemma}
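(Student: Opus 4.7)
The plan is to split the biconditional into its two directions. The $(\Rightarrow)$ implication is essentially definitional: the displayed Ext-equation is literally one half of the defining Ext-characterizations of an $n$-cluster tilting subcategory, so there is nothing to prove there. For the inclusion $\mathcal{P}\subseteq\mathcal{C}$: any projective $P$ has $\Ext^i(P,\mathcal{C})=0$ for $i\geq 1$, and therefore lies in the \emph{other} defining characterization $\mathcal{C}=\{X\mid \Ext^i(X,\mathcal{C})=0,\ 1\leq i\leq n-1\}$, so $P\in\mathcal{C}$.

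For $(\Leftarrow)$ I would recover, from the two hypotheses, the remaining ingredients of $n$-cluster tilting: the generating-cogenerating property and the second Ext-characterization. Generating is immediate from $\mathcal{P}\subseteq\mathcal{C}$ together with enough projectives in $\mathcal{A}$. For cogenerating, any injective $I$ satisfies $\Ext^i(\mathcal{C},I)=0$ for $i\geq 1$ by injectivity, so the assumed characterization places $I\in\mathcal{C}$; enough injectives then finishes. Before attacking the second Ext-characterization, I would point out a useful free consequence of the hypothesis: for $C,C'\in\mathcal{C}$ the characterization applied at $X=C'$ yields $\Ext^i(C,C')=0$ for $1\leq i\leq n-1$, and this statement is symmetric in $C$ and $C'$. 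In particular the inclusion $\mathcal{C}\subseteq\{X\mid \Ext^i(X,\mathcal{C})=0\}$ is already in hand.

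The substantive step is the reverse inclusion: given $X\in\mathcal{A}$ with $\Ext^i(X,\mathcal{C})=0$ for $1\leq i\leq n-1$, show $X\in\mathcal{C}$. By the hypothesis this reduces to proving $\Ext^i(\mathcal{C},X)=0$ in the same range. My approach is to use contravariant finiteness to iterate right $\mathcal{C}$-approximations of $X$, each of them epic since $\mathcal{P}\subseteq\mathcal{C}$ and $\mathcal{A}$ has enough projectives, building an exact sequence $\cdots\to C_1\to C_0\to X\to 0$ with every $C_j\in\mathcal{C}$. Splitting into short exact sequences $0\to K_j\to C_j\to K_{j-1}\to 0$ (with $K_{-1}=X$) and applying both $\Hom(-,C)$ and $\Hom(C,-)$ for $C\in\mathcal{C}$, the already-established Ext-vanishing $\Ext^i(\mathcal{C},\mathcal{C})=0$ lets one dimension-shift along the kernels $K_j$ and transfer the hypothesis $\Ext^i(X,\mathcal{C})=0$ into the target $\Ext^i(\mathcal{C},X)=0$. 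This is Iyama's argument for \cite[Proposition 2.4.1]{I2}.

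The main obstacle is exactly this last transfer, because the assumption and the desired conclusion live in \emph{opposite} Ext-slots: a priori there is no formal reason that $\Ext^i(X,\mathcal{C})=0$ should force $\Ext^i(\mathcal{C},X)=0$, and the translation is made possible only by the combined structure of the right $\mathcal{C}$-approximation resolution, the self-orthogonality $\Ext^i(\mathcal{C},\mathcal{C})=0$, and the fact that $\mathcal{C}$ is assumed to equal the vanishing class for $\Ext^i(\mathcal{C},-)$. The delicate part is the bookkeeping: one has to verify that each invocation of the long exact sequence falls inside the valid range $1\leq\bullet\leq n-1$ in which Ext between objects of $\mathcal{C}$ vanishes, so that the dimension shifts actually chain together over the full required range.
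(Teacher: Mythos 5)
The paper does not actually prove this lemma: it is quoted, with only a pointer to \cite[Proposition 2.4.1]{I2}, so there is no internal proof to compare against (note also that the $\mathcal{M}$ in the displayed formula is a typo for $\mathcal{C}$). Judged on its own terms, your handling of the routine parts is correct: the forward direction, the generating and cogenerating properties, the observation that injectives satisfy the assumed characterization and hence lie in $\mathcal{C}$, and the self-orthogonality $\Ext^i(\mathcal{C},\mathcal{C})=0$ giving $\mathcal{C}\subseteq\{X\mid \Ext^i(X,\mathcal{C})=0\}$ are all exactly as they should be.

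The gap is in the one step you yourself flag as substantive, and the mechanism you describe for it would fail if executed literally. ``Dimension-shifting the hypothesis $\Ext^i(X,\mathcal{C})=0$ into the target $\Ext^i(\mathcal{C},X)=0$'' along the kernels $K_j$ does not close up: applying $\Hom(C,-)$ to $0\to K_j\to C_j\to K_{j-1}\to 0$ only embeds $\Ext^i(C,K_{j-1})$ into $\Ext^{i+1}(C,K_j)$, so the degree climbs out of the window $1\leq i\leq n-1$ in which $\Ext(\mathcal{C},\mathcal{C})$ is controlled; applying $\Hom(-,C)$ gives $\Ext^i(K_j,\mathcal{C})=0$ only for $0<i<n-1-j$, a window that shrinks at each step. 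Neither chain of shifts ever terminates at a group known to vanish, so no direct ``transfer'' between the two Ext-slots is obtained. The argument that actually works is a two-stage one. Stage one: since each $C_j\to K_{j-1}$ is a right $\mathcal{C}$-approximation, the map $\Hom(C,C_j)\to\Hom(C,K_{j-1})$ is surjective and $\Ext^1(C,C_j)=0$, so $\Ext^1(\mathcal{C},K_j)=0$ for every $j\geq 0$ \emph{outright}, independently of the hypothesis on $X$; the surjections $\Ext^{i}(\mathcal{C},K_{j-1})\twoheadrightarrow\Ext^{i+1}(\mathcal{C},K_j)$ (valid for $i+1\leq n-1$) then propagate this to $\Ext^i(\mathcal{C},K_{n-2})=0$ for all $0<i<n$, and the assumed equality $\mathcal{C}=\{Y\mid \Ext^i(\mathcal{C},Y)=0\}$ --- used here and only here --- forces $K_{n-2}\in\mathcal{C}$. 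Stage two: walk back down. Since $\Ext^1(K_{j-1},\mathcal{C})=0$ in the relevant range (this is where the hypothesis $\Ext^i(X,\mathcal{C})=0$ enters, via the $\Hom(-,C)$ shifts), each sequence $0\to K_j\to C_j\to K_{j-1}\to 0$ splits as soon as $K_j\in\mathcal{C}$ is known; hence $K_{j-1}$ is a direct summand of $C_j$ and lies in $\mathcal{C}$, and after finitely many steps $X=K_{-1}\in\mathcal{C}$. In other words, one never proves $\Ext^i(\mathcal{C},X)=0$ by shifting; one proves that $X$ is a direct summand of $C_0$. Your proposal lists all the right ingredients but omits this assembly, which is the entire content of the proposition.
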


Now we can prove the main result of this section.

\begin{theorem}\label{result1}
Let $M$ be a finitely generated left $\Lambda$-module. $\Add(M)$ is an $n$-cluster tilting subcategory of $\Mod$-$\Lambda$ if and only if  $\add(M)$ is an $n$-cluster tilting subcategory of $\modd$-$\Lambda$.
\begin{proof} Without loss of generality, we can assume that $M$ is a basic finitely generated left $\Lambda$-module. If $\Add(M)$ is an $n$-cluster tilting subcategory of $\Mod$-$\Lambda$, then it is easy to see that $\add(M)$ is an $n$-cluster tilting subcategory of $\modd$-$\Lambda$. Now assume that $\add(M)$ is an $n$-cluster tilting subcategory of $\modd$-$\Lambda$. Since $M$ is $n$-rigid by assumption and $\Add(M)$ is functorially finite in $\Mod$-$\Lambda$, by the lemma \ref{lemma4}, it is enough to show that if $X\in \Mod$-$\Lambda$ such that $\Ext^i_{\Lambda}(M,X)=0$ for each $i\in\{1, 2, \ldots, n-1\}$, then $X\in \Add(M)$. Let $$0\rightarrow X\rightarrow I^0 \rightarrow I^1 \rightarrow\cdots\rightarrow I^n$$
be an injective resolution of $X$. Since $\Ext^i_{\Lambda}(M,X)=0$ for each $i\in\{1, 2, \ldots, n-1\}$, we have an exact sequence
$$0\rightarrow \Hom_{\Lambda}(M,X)\rightarrow \Hom_{\Lambda}(M,I^0) \rightarrow \Hom_{\Lambda}(M,I^1) \rightarrow \cdots \rightarrow \Hom_{\Lambda}(M,I^n) \rightarrow C\rightarrow 0.$$
Let $\Gamma=\Endd_\Lambda(M)^{op}$. Since $\gl.dim(\Gamma) \leq n+1$, $\Hom_{\Lambda}(M,X)$ is a projective $\Gamma$-module. We know that any projective module over an Artin algebra is a direct sum of indecomposable projective modules. Indecomposable projective $\Gamma$-modules are of the form $\Hom_{\Lambda}(M,M_i)$ such that $M_i$ is an indecomposable direct summand of $M$. Therefore
$$\Hom_{\Lambda}(M,X)\simeq \oplus_{i\in I}\Hom_{\Lambda}(M,M_i)\simeq \Hom_{\Lambda}(M,\oplus_{i\in I}M_i).$$
Since
\begin{align}
\Hom_{\Gamma}(\Hom_{\Lambda}(M,\oplus_{i\in I}M_i),\Hom_{\Lambda}(M,X))&\simeq \Hom_{\Gamma}(\oplus_{i\in I}\Hom_{\Lambda}(M,M_i),\Hom_{\Lambda}(M,X)) \notag \\
&\simeq \prod_{i\in I} \Hom_{\Gamma}(\Hom_{\Lambda}(M, M_i),\Hom_{\Lambda}(M, X)) \notag \\
&\simeq \prod_{i\in I} \Hom_{\Lambda}(M_i, X) \notag \\
&\simeq \Hom_{\Lambda}(\oplus_{i\in I} M_i, X), \notag
\end{align}
there exists a morphism $h:\oplus_{i\in I}M_i \rightarrow X$ such that
$\Hom_{\Lambda}(M,h):\Hom_{\Lambda}(M,\oplus_{i\in I}M_i)\rightarrow \Hom_{\Lambda}(M,X)$ is the above isomorphism. We show that $h:\oplus_{i\in I}M_i \rightarrow X$ is an isomorphism. First consider the exact sequence
$$0\rightarrow K\rightarrow \oplus_{i\in I}M_i \overset{h}{\rightarrow} X,$$
where $K$ is the kernel of $h$. Applying the functor $\Hom_{\Lambda}(M,-)$ we conclude that $K=0$ because $M$ is a generating module and so $h$ is a monomorphism. Now applying the functor $\Hom_{\Lambda}(M,-)$ to the exact sequence
$$0\rightarrow \oplus_{i\in I}M_i \overset{h}{\rightarrow} X\overset{g}\rightarrow C \rightarrow0,$$
where $C$ is the cokernel of $h$. We get an exact sequence
$$0\rightarrow \Hom_{\Lambda}(M,\oplus_{i\in I}M_i)\overset{\Hom_{\Lambda}(M, h)}\longrightarrow \Hom_{\Lambda}(M, X)\overset{\Hom_{\Lambda}(M, g)} \longrightarrow \Hom_{\Lambda}(M, C).$$
Since $\Hom_{\Lambda}(M, h)$ is an isomorphism, $\Hom_{\Lambda}(M, g)=0$. $M$ is a generating module and so $C=0$. Therefore $h$ is an isomorphism and the result follows.
\end{proof}
\end{theorem}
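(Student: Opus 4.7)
The plan is to verify the hypotheses of Lemma~\ref{lemma4} for $\Add(M)$ in $\Mod$-$\Lambda$, leveraging the global dimension bound on the endomorphism ring from higher Auslander theory. The forward implication (that $\Add(M)$ being $n$-cluster tilting in $\Mod$-$\Lambda$ forces $\add(M)$ to be $n$-cluster tilting in $\modd$-$\Lambda$) is the easy direction: a finitely generated $X$ whose $\Ext^i(M,-)$ and $\Ext^i(-,M)$ vanish for $1\le i\le n-1$ already lies in $\Add(M)$, and Krull--Schmidt then places $X$ in $\add(M)$ (indecomposable summands of $M$ have local endomorphism rings), while functorial finiteness descends trivially. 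I would therefore concentrate on the converse.

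Assume $M$ is basic and set $\Gamma=\Endd_\Lambda(M)^{op}$. The subcategory $\Add(M)$ is automatically functorially finite in $\Mod$-$\Lambda$, and since $M$ is finitely generated, $\Ext^i_\Lambda(M,-)$ commutes with direct sums, so the rigidity $\Ext^i(M,M)=0$ extends to $\Ext^i(M,\Add(M))=0$. By Lemma~\ref{lemma4} it suffices to show that every $X\in\Mod$-$\Lambda$ with $\Ext^i_\Lambda(M,X)=0$ for $1\le i\le n-1$ lies in $\Add(M)$. For this I would invoke two facts from higher Auslander theory: first, $\gl.dim(\Gamma)\le n+1$; second, since $\add(M)$ is cogenerating in $\modd$-$\Lambda$ and every injective over an Artin algebra is a direct sum of finitely generated indecomposable injectives, every injective $\Lambda$-module in $\Mod$-$\Lambda$ already belongs to $\Add(M)$. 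Choose an injective coresolution $0\to X\to I^0\to\cdots\to I^n$ and apply $\Hom_\Lambda(M,-)$; the $\Ext$-vanishing together with the projectivity of each $\Hom_\Lambda(M,I^i)$ over $\Gamma$ yields an exact sequence
\begin{equation*}
0\to\Hom_\Lambda(M,X)\to\Hom_\Lambda(M,I^0)\to\cdots\to\Hom_\Lambda(M,I^n)\to C\to 0
\end{equation*}
which is a projective resolution of length $n+1$ of the cokernel $C$. Since $\gl.dim(\Gamma)\le n+1$, the $(n+1)$-st syzygy $\Hom_\Lambda(M,X)$ is projective. Because $\Gamma$ is a semiperfect (indeed Artin) ring, this decomposes as $\bigoplus_{i\in I}\Hom_\Lambda(M,M_i)\cong\Hom_\Lambda(M,\bigoplus_{i\in I}M_i)$ for indecomposable summands $M_i$ of $M$; lifting via this natural isomorphism produces a map $h\colon\bigoplus_{i\in I}M_i\to X$ with $\Hom_\Lambda(M,h)$ an isomorphism. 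Applying $\Hom_\Lambda(M,-)$ to the kernel and cokernel sequences of $h$ and using that $M$ is a generator (because $\Lambda\in\add(M)$) forces both to vanish, so $h$ is an isomorphism and $X\in\Add(M)$.

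The step I expect to be the main obstacle is the transition from $\modd$-$\Lambda$ to $\Mod$-$\Lambda$: I need arbitrary injective $\Lambda$-modules to lie in $\Add(M)$, which hinges on the structure theorem for injectives over Artin algebras combined with the cogenerating property of $\add(M)$, and I must then use $\gl.dim(\Gamma)\le n+1$ to upgrade a finite-length exact sequence of $\Gamma$-modules into projectivity of $\Hom_\Lambda(M,X)$, which is a priori a very large $\Gamma$-module. Once these two ingredients are in place, the semiperfectness of $\Gamma$ and the generator property of $M$ close the argument routinely.
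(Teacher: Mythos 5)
Your proposal follows essentially the same route as the paper's proof: reduce via Lemma~\ref{lemma4} to showing that any $X$ with $\Ext^i_\Lambda(M,X)=0$ for $1\le i\le n-1$ lies in $\Add(M)$, apply $\Hom_\Lambda(M,-)$ to an injective coresolution, use $\gl.dim(\Gamma)\le n+1$ to deduce projectivity of $\Hom_\Lambda(M,X)$, decompose it over the semiperfect ring $\Gamma$, lift to a map $h\colon\bigoplus_{i\in I}M_i\to X$, and kill its kernel and cokernel using that $M$ is a generator. The only difference is that you make explicit two steps the paper leaves implicit (that arbitrary injectives lie in $\Add(M)$ via the structure theory of injectives over Noetherian rings, and the functorial finiteness of $\Add(M)$), which is a sound filling-in rather than a new argument.
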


\begin{corollary}\label{firstresult}
Let $(\Lambda,\mathfrak{m})$ be an $n$-homological pair of finite type, then $(\Lambda,\mathfrak{m})$ is pure semisimple.
\begin{proof}
Let $M$ be an additive generator of $\mathfrak{m}$. Then by the theorem \ref{result1}, $\Add(\mathfrak{m})=\Add(M)$ is an $n$-cluster tilting subcategory of $\Mod$-$\Lambda$ and the result follows.
\end{proof}
\end{corollary}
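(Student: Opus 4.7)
The plan is to reduce the corollary to a direct application of Theorem \ref{result1}, since the hypothesis already supplies an additive generator of $\mathfrak{m}$.

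First I would unwind definitions. By assumption $(\Lambda,\mathfrak{m})$ is of finite type, so there exists a module $M \in \modd$-$\Lambda$ with $\add(M) = \mathfrak{m}$ (without loss of generality $M$ basic). Because $(\Lambda,\mathfrak{m})$ is an $n$-homological pair, $\mathfrak{m}$ is an $n$-cluster tilting subcategory of $\modd$-$\Lambda$, hence $\add(M)$ is an $n$-cluster tilting subcategory of $\modd$-$\Lambda$. This puts us exactly in the setting of Theorem \ref{result1}.

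Next I would invoke Theorem \ref{result1} to conclude that $\Add(M)$ is an $n$-cluster tilting subcategory of $\Mod$-$\Lambda$. The only remaining observation is the equality $\Add(M) = \Add(\mathfrak{m})$: direct summands of arbitrary coproducts of copies of $M$ are precisely the direct summands of arbitrary coproducts of objects of $\add(M) = \mathfrak{m}$, since a coproduct of objects of $\add(M)$ is a direct summand of some coproduct of copies of $M$. By the definition of pure semisimple $n$-homological pair, this gives the statement.

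There is no real obstacle here; the entire substance of the argument is contained in Theorem \ref{result1}. The proof of the corollary is essentially a bookkeeping step, matching the additive generator supplied by the finite type hypothesis to the hypothesis of Theorem \ref{result1} and then identifying $\Add(M)$ with $\Add(\mathfrak{m})$.
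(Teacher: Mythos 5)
Your proposal is correct and follows exactly the paper's own argument: take the additive generator $M$, apply Theorem \ref{result1} to conclude $\Add(M)$ is an $n$-cluster tilting subcategory of $\Mod$-$\Lambda$, and identify $\Add(M)$ with $\Add(\mathfrak{m})$. The extra detail you supply on the equality $\Add(M)=\Add(\mathfrak{m})$ is a harmless elaboration of the same one-line proof.
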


The following corollary is an immediate consequence of the $n$-Auslander correspondence \cite{I2} (see also \cite{I3}) and the theorem \ref{result1}.

\begin{corollary}
There are bijections between the set of equivalence classes of
$n$-cluster tilting subcategories $\mathcal{M}$ with additive
generators of $\modd$-$\Lambda$ for Artin algebras $\Lambda$, the
set of isomorphism classes of basic finitely generated left
$\Lambda$-modules $M$ that $\Add(M)$ are $n$-cluster tilting
subcategories of $\Mod$-$\Lambda$ for Artin algebras $\Lambda$ and
the set of Morita-equivalence classes of $n$-Auslander algebras.
\end{corollary}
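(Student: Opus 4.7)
The plan is to read the corollary as the composition of two bijections: one obtained directly from Theorem \ref{result1}, and one which is the $n$-Auslander correspondence of Iyama. Neither half requires new work beyond what is already in the excerpt.

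First I would construct the bijection between the first and second sets. Send an equivalence class $[\mathcal{M}]$, where $\mathcal{M} \subseteq \modd$-$\Lambda$ is an $n$-cluster tilting subcategory possessing an additive generator, to the isomorphism class of its unique basic additive generator $M$, so that $\mathcal{M} = \add(M)$. Theorem \ref{result1} promotes this to $\Add(M)$ being an $n$-cluster tilting subcategory of $\Mod$-$\Lambda$, placing $M$ in the second set. In the reverse direction, given a basic $M \in \modd$-$\Lambda$ with $\Add(M)$ an $n$-cluster tilting subcategory of $\Mod$-$\Lambda$, Theorem \ref{result1} yields $\add(M)$ as an $n$-cluster tilting subcategory of $\modd$-$\Lambda$ with $M$ as additive generator. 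The two assignments are mutually inverse once one checks that equivalence of the pairs $(\Lambda, \add(M))$ and $(\Lambda', \add(M'))$ amounts to a Morita equivalence $\Lambda \sim \Lambda'$ carrying $M$ to $M'$ as basic modules, which is routine.

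Next I would apply the $n$-Auslander correspondence (\cite{I2}, see also \cite{I3}), which asserts that $M \mapsto \Endd_\Lambda(M)^{op}$ gives a bijection between the second set and the Morita-equivalence classes of $n$-Auslander algebras, that is, Artin algebras $\Gamma$ satisfying $\gl.dim(\Gamma) \leq n+1$ and dominant dimension at least $n+1$. Composing this with the first bijection closes the triangle of the three sets, proving the corollary.

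The only point that merits care, and really the only non-bookkeeping ingredient, is the invocation of Theorem \ref{result1}, which has already been established; everything else is a matter of verifying that the three natural maps are well defined on equivalence (respectively isomorphism, respectively Morita) classes and that they are mutually inverse. No further obstacle is anticipated, since both Theorem \ref{result1} and the $n$-Auslander correspondence are cited as known; the proof of the corollary is therefore essentially a one-line composition, as reflected in the terse style appropriate for a corollary.
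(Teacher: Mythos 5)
Your proposal is correct and matches the paper exactly: the paper gives no written proof, stating only that the corollary is an immediate consequence of Theorem \ref{result1} together with the $n$-Auslander correspondence of Iyama, which is precisely the two-step composition you describe. The bookkeeping you flag (well-definedness on equivalence, isomorphism, and Morita classes) is indeed the only content beyond the two cited results, and the paper leaves it implicit just as you do.
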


Motivated by the theorem \ref{result1} and corollary \ref{firstresult} we pose the following question.

\begin{question} \label{ques1}Let $\Lambda$ be an Artin algebra and $\mathfrak{m}$ be an $n$-cluster tilting subcategory of $\modd$-$\Lambda$.
\begin{itemize}
\item[(i)]
 Is there an $n$-cluster tilting subcategory $\mathcal{M}$ of $\Mod$-$\Lambda$ that contains $\mathfrak{m}$?

\item[(ii)]
Assume that there exists an $n$-cluster tilting subcategory $\mathcal{M}$ of $\Mod$-$\Lambda$ that contains $\mathfrak{m}$. When we can describe the objects of $\mathcal{M}$ in terms of the objects of $\mathfrak{m}$?
\end{itemize}
\end{question}


\section{pure semisimple $n$-homological pairs are of finite type}

In this section we show that if an $n$-homological pair $(\Lambda,\mathfrak{m})$ is pure semisimple then $(\Lambda,\mathfrak{m})$ is of finite type.

\subsection{The functor category}

In this subsection we recall some preliminaries on functor categories. For further information the reader is referred to \cite{Au, Au1}.

Let $(\Lambda,\mathfrak{m})$ be an $n$-homological pair, we denote by $\Mod(\mathfrak{m})$ the category of all additive covariant functors from $\mathfrak{m}$ to the category of abelian groups. Objects of $\Mod(\mathfrak{m})$ are called $\mathfrak{m}$-modules and for $\mathfrak{m}$-modules $F_1$ and $F_2$ we denote by $\Hom_{\mathfrak{m}}(F_1,F_2)$ the set of all natural transformations from $F_1$ to $F_2$. It is known that $\Mod(\mathfrak{m})$ is an abelian category. Kernels, cokernels, product, direct sum and exactness are all defined pointwise. For each $M\in \modd$-$\Lambda$ we denote the functor $\Hom_{\Lambda}(M,-):\mathfrak{m}\longrightarrow Ab$ by $(M,-)$. It is well known that for each $M\in \mathfrak{m}$, $(M,-)$ is a projective object in $\Mod(\mathfrak{m})$. For every $F\in \Mod(\mathfrak{m})$ there exists an exact sequence

$$\oplus_{i\in I}(Y_i,-)\rightarrow \oplus_{j\in J}(X_j,-)\rightarrow F\rightarrow 0, $$
where $X_i$ and $Y_j$ are in $\mathfrak{m}$ for each $i, j$. We recall that $F$ is said to be finitely generated if the set $J$ can be chosen to be finite, and $F$ is said to be finitely presented if both the sets $I$ and $J$ can be chosen to be finite. In the other words, $F$ is finitely generated if and only if there is an epimorphism $(X,-)\rightarrow F\rightarrow 0$ with $X\in \mathfrak{m}$, and is finitely presented if and only if there is an exact sequence $(Y,-)\rightarrow (X,-)\rightarrow 0$ with $X,Y\in \mathfrak{m}$. Because $\mathfrak{m}$ is idempotent complete the Yoneda functor $P:\mathfrak{m}\rightarrow \Mod(\mathfrak{m})$ induces an equivalence $P:\mathfrak{m}\rightarrow \mathfrak{p}(\mathfrak{m})$ where $\mathfrak{p}(\mathfrak{m})$ is the category of all finitely generated projective $\mathfrak{m}$-modules.

Following \cite{Au2} we say that an $\mathfrak{m}$-module $F$ is noetherian (resp., artinian) if it satisfies the ascending (resp., descending) chain condition on submodules. We say that an $\mathfrak{m}$-module $F$ is finite if it is both noetherian and artinian. A functor $F$ is called simple if it is not zero and the only subfunctors of $F$ are $0$ and $F$.

\begin{definition}$($\cite{Au2}$)$
An $\mathfrak{m}$-module $F$ is said to be locally finite if every finitely generated submodule of $F$ is finite. The category $\Mod(\mathfrak{m})$ is said to be locally finite if every $\mathfrak{m}$-module is locally finite.
\end{definition}

\begin{proposition}$($\cite[Proposition 1.11]{Au2}$)$\label{prop1}
Let $(\Lambda,\mathfrak{m})$ be an $n$-homological pair. Then the following statements are equivalent:
\begin{itemize}
\item[a)]
$\Mod(\mathfrak{m})$ is locally finite.
\item[b)]
Every finitely generated $\mathfrak{m}$-module is finite.
\item[c)]
$(X,-)$ is finite for each $X\in \mathfrak{m}$.
\item[d)]
Every simple $\mathfrak{m}$-module is finitely presented and every nonzero $\mathfrak{m}$-module has a simple submodule.
\end{itemize}
\end{proposition}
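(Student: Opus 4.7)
The plan is to prove the four-way equivalence via the chain (a) $\Leftrightarrow$ (b) $\Leftrightarrow$ (c) together with (b) $\Leftrightarrow$ (d), using only standard functor-category manipulations inside $\Mod(\mathfrak{m})$. The equivalence (a) $\Leftrightarrow$ (b) is a matter of unwinding definitions: every finitely generated $\mathfrak{m}$-module is a finitely generated submodule of itself, and every finitely generated submodule of any module is itself a finitely generated $\mathfrak{m}$-module. For (b) $\Leftrightarrow$ (c): each $(X,-)$ is finitely generated, with Yoneda generator $1_X \in (X,-)(X)$, so (b) gives (c); conversely, any finitely generated $F \in \Mod(\mathfrak{m})$ admits an epimorphism $(X_1 \oplus \cdots \oplus X_k,-) \twoheadrightarrow F$ with $X_i \in \mathfrak{m}$, the source is finite by (c) since $\mathfrak{m}$ is closed under finite direct sums, and finiteness (being both noetherian and artinian) is preserved under quotients.

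For (b) $\Rightarrow$ (d), I would argue as follows. Under (b) every finitely generated module is noetherian, so the kernel of any morphism $(Y,-) \to (X,-)$ is finitely generated, making every finitely generated $\mathfrak{m}$-module finitely presented. Any simple $S$ is in fact cyclic, since for $0 \neq x \in S(X)$ the corresponding Yoneda map $(X,-) \to S$ is surjective by simplicity; hence $S$ is finitely presented. The socle half of (d) follows because a nonzero $F$ contains a nonzero cyclic subfunctor, which by (b) is artinian and therefore has a minimal nonzero, and hence simple, submodule.

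The main obstacle is the converse (d) $\Rightarrow$ (b). The plan is to fix $X \in \mathfrak{m}$, show $(X,-)$ has finite length, and then deduce that every finitely generated module does. Using the socle hypothesis in (d), one constructs a strictly ascending socle filtration $0 = F_0 \subsetneq F_1 \subsetneq F_2 \subsetneq \cdots$ of $(X,-)$ with each $F_{i+1}/F_i$ a nonzero semisimple module; since every simple factor is finitely presented, one verifies inductively that every $F_i$ is finitely generated. The delicate point is termination of the series, and this is where the ambient Artin hypothesis genuinely enters. The route I would take is the identification $\Endd_{\Mod(\mathfrak{m})}((X,-))^{\mathrm{op}} \cong \Endd_\Lambda(X)$, whose right-hand side is a module-finite algebra over the commutative Artin center of $\Lambda$ and therefore has finite length; a suitable pairing between composition factors of $(X,-)$ and simple $\Endd_\Lambda(X)$-modules, together with the finite presentability of the simples, then bounds the length of the socle series. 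This is essentially Auslander's classical argument, and it is the one step whose verification requires care rather than formal manipulation.
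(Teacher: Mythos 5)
First, a remark on scope: the paper does not actually prove this proposition --- it is quoted directly from Auslander \cite[Proposition 1.11]{Au2} --- so there is no in-paper argument to compare against, and the only question is whether your proof stands on its own. Your treatment of a) $\Leftrightarrow$ b) $\Leftrightarrow$ c) and of b) $\Rightarrow$ d) is correct and is the standard unwinding of definitions.

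The direction d) $\Rightarrow$ b), however, has a genuine gap, located exactly at the step you flag as delicate. The proposed termination mechanism --- controlling the socle/composition series of $(X,-)$ through the isomorphism $\Endd_{\Mod(\mathfrak{m})}((X,-))^{\mathrm{op}}\cong\Endd_{\Lambda}(X)$ and a ``pairing'' with simple $\Endd_{\Lambda}(X)$-modules --- cannot work. For indecomposable $Y,Z\in\mathfrak{m}$ the simple functor $S_Y=(Y,-)/\mathcal{J}(Y,-)$ satisfies $S_Y(Z)=0$ unless $Z\cong Y$, so every composition factor $S_Y$ of $(X,-)$ with $Y\not\cong X$ vanishes upon evaluation at $X$ and is invisible to $\Endd_{\Lambda}(X)$. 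Quantitatively, by Lemma \ref{lemmaz}(b) the multiplicity of $S_Y$ in a finite functor $F$ equals the length of $F(Y)$ over $\Endd_{\Lambda}(Y)$, so the length of $(X,-)$ is the sum of the lengths of $\Hom_{\Lambda}(X,Y)$ over \emph{all} indecomposables $Y$; already for $\Lambda=k[x]/(x^2)$, $n=1$, $X=\Lambda$ this sum is $3$ while $\Endd_{\Lambda}(X)$ has length $2$, so no bound by $\Endd_{\Lambda}(X)$ is available. In fact finiteness of $(X,-)$ is \emph{equivalent} to $\Hom_{\Lambda}(X,Y)=0$ for all but finitely many indecomposables $Y$, and that is precisely the nontrivial content of d) $\Rightarrow$ c); your sketch never engages with it. (A secondary issue: in the ascending socle filtration you would also need each layer $F_{i+1}/F_i$ to be a \emph{finite} sum of simples before concluding that $F_{i+1}$ is finitely generated; finite presentability of the individual simples does not supply this.) Auslander's actual argument is of a different character: it exploits the correspondence between finitely presented simple functors and left almost split morphisms (Lemma \ref{lemmaa}) to force elements of $(X,-)$ to die along chains of non-sections; some input of this kind is unavoidable, and the hard direction of your proof needs to be rebuilt around it.
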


We use the following description of simple modules from \cite{Au2}. Since an indecomposable object $X\in \mathfrak{m}$ has a local endomorphism ring, the indecomposable projective object $(X, -)$ has a unique maximal subfunctor denoted by $\mathcal{J}(X, -)$. Thus for any indecomposable object $X\in \mathfrak{m}$, the functor $\frac{(X, -)}{\mathcal{J}(X, -)}$ is simple. Moreover, given any simple functor $F\in \Mod(\mathfrak{m})$, there is a unique (up to isomorphism) indecomposable object $X\in \mathfrak{m}$ such that $\frac{(X, -)}{\mathcal{J}(X, -)}\simeq F$. Hence the correspondence $X\mapsto \frac{(X, -)}{\mathcal{J}(X, -)}$ gives a bijection between the isomorphism classes of simple objects in $\Mod(\mathfrak{m})$ and the isomorphism classes of indecomposable objects in $\mathfrak{m}$.

Let $X$ be an indecomposable object in $\mathfrak{m}$. We recall that a morphism $f:X\rightarrow Y$ is called left almost split if
\begin{itemize}
\item[a)]
$f$ is not a section.
\item[b)]
If $g:X\rightarrow Z$ is a morphism in $\mathfrak{m}$ which is not a section, then there is a morphism $h:Y\rightarrow Z$ such that $hf=g$.
\end{itemize}

\begin{lemma}\label{lemmaa}$($\cite[Corollary 2.6]{Au2}$)$
Let $(\Lambda,\mathfrak{m})$ be an $n$-homological pair and $X$ be an indecomposable object in $\mathfrak{m}$. The simple $\mathfrak{m}$-module $\frac{(X,-)}{\mathcal{J}(X,-)}$ is finitely presented if and only if there is a left almost split morphism $f:X\rightarrow Y$. Further, if $f:X\rightarrow Y$ is a left almost split morphism, then $(Y,-)\rightarrow (X,-)\rightarrow \frac{(X,-)}{\mathcal{J}(X,-)} \rightarrow 0$ is exact and is a finite projective presentation of $\frac{(X,-)}{\mathcal{J}(X,-)}$.
\end{lemma}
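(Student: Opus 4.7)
The plan is to pass through the Yoneda embedding and exploit the concrete description of the radical ideal $\mathcal{J}(X,-)$ available when $X$ is indecomposable. Since $\Endd(X)$ is local, one checks directly from the definition that a morphism $h:X\to Z$ lies in $\mathcal{J}(X,Z)$ exactly when $h$ is not a section: the condition that $1_X-gh$ be invertible for every $g:Z\to X$ is equivalent, in the local ring $\Endd(X)$, to saying that no $g$ satisfies $gh=1_X$. Consequently the simple functor $S_X:=(X,-)/\mathcal{J}(X,-)$ is literally the quotient of $(X,-)$ by the ideal of non-sections out of $X$.

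For the ``if'' direction, given a left almost split morphism $f:X\to Y$, I would consider the natural transformation $f^*:(Y,-)\to(X,-)$ corresponding to $f$ under Yoneda (precomposition with $f$) and show that $\mathrm{im}(f^*)=\mathcal{J}(X,-)$. The inclusion $\subseteq$ follows because $f$ itself is a non-section, hence lies in $\mathcal{J}(X,Y)$, and $\mathcal{J}$ is a two-sided ideal of $\mathfrak{m}$. The reverse inclusion is precisely axiom (b) in the definition of left almost split: every non-section $g:X\to Z$ factors as $g=hf$. Because $(X,-)$ and $(Y,-)$ are projective in $\Mod(\mathfrak{m})$, this produces the finite projective presentation $(Y,-)\to(X,-)\to S_X\to 0$ claimed in the statement.

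For the ``only if'' direction, suppose $S_X$ admits a finite projective presentation $(Y,-)\to(X',-)\to S_X\to 0$ with $X',Y\in\mathfrak{m}$. The first task is to reduce to $X'=X$. Decomposing $X'=\bigoplus_i X'_i$ into indecomposables (which is possible because $\mathfrak{m}\subseteq\modd$-$\Lambda$ is Krull--Schmidt), Yoneda gives $\Hom((X'_i,-),S_X)\cong S_X(X'_i)$, and this vanishes unless $X'_i\simeq X$ by the stated bijection between simples in $\Mod(\mathfrak{m})$ and indecomposables in $\mathfrak{m}$. Summands of $(X',-)$ not associated to $X$ are therefore sent to zero in $S_X$, so after a standard Krull--Schmidt cleanup the presentation may be rewritten as $(Y,-)\xrightarrow{\alpha^*}(X,-)\to S_X\to 0$, where $\alpha:X\to Y$ comes from Yoneda. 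I would then verify that $\alpha$ is left almost split: it cannot be a section, for otherwise $\alpha^*$ would be a split epimorphism and $S_X$ would vanish; and for any non-section $g:X\to Z$, the equality $\mathrm{im}(\alpha^*)=\mathcal{J}(X,-)$ supplies some $h:Y\to Z$ with $g=h\alpha$.

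The main obstacle I anticipate is the Krull--Schmidt cleanup in the ``only if'' direction, namely carefully passing from an arbitrary finite presentation of $S_X$ to one whose middle term is precisely $(X,-)$ rather than an arbitrary $X'\in\mathfrak{m}$. Once that reduction is in hand, both directions of the equivalence are quick translations between the factorization properties of $\alpha$ and the exactness properties of its Yoneda transform $\alpha^*$, and the statement about the explicit finite projective presentation drops out of the construction itself.
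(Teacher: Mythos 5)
The paper does not prove this lemma; it is quoted directly from Auslander's \emph{Representation theory of Artin algebras II} (Corollary 2.6), and your argument is a correct reconstruction of exactly that standard proof: the identification of $\mathcal{J}(X,Z)$ with the non-sections $X\to Z$ (using that $\Endd_\Lambda(X)$ is local), the computation $\mathrm{im}(f^*)=\mathcal{J}(X,-)$ for the ``if'' direction, and the Krull--Schmidt/projective-cover reduction for the ``only if'' direction. The one step you flag as delicate --- replacing an arbitrary finite presentation $(Y,-)\to(X',-)\to S_X\to 0$ by one with middle term $(X,-)$ --- does go through: lifting the canonical quotient $\pi:(X,-)\to S_X$ against the given epimorphism and back exhibits $(X,-)$ as a direct summand of $(X',-)$ compatibly with the maps to $S_X$ (since $\Endd((X,-))\cong\Endd_\Lambda(X)^{op}$ is local), whence $\mathcal{J}(X,-)=\ker\pi$ is a quotient of the finitely generated functor $\ker\epsilon$ and is therefore finitely generated; so the proposal is sound.
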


We recall that if $X$ be an indecomposable object in $\mathfrak{m}$, then there exists a left almost split morphism $f:X\rightarrow Y$ (see \cite[Section 3.3.1]{I1}).

We now use the description of the simple $\mathfrak{m}$-modules to describe when a nonzero $\mathfrak{m}$-module has a simple submodule.
\begin{definition}$($\cite{Au2}$)$
Let $F$ be an $\mathfrak{m}$-module and $X$ be an object in $\mathfrak{m}$ (not necessarily indecomposable). An element $x$ in $F(X)$ is said to be universally minimal if $x\neq 0$ and has the property that given any morphism $g:X\rightarrow Y$ in $\mathfrak{m}$ which is not a section, then $F(g)(x)=0$.
\end{definition}

\begin{proposition}\label{theoremb}$($\cite[Proposition 2.9]{Au2}$)$
Let $F$ be an $\mathfrak{m}$-module and $X\in \mathfrak{m}.$
\begin{itemize}
\item[a)]
An element $x$ in $F(X)$ is universally minimal if and only if $X$ is indecomposable in $\mathfrak{m}$ and the morphism $(X,-)\rightarrow F$ corresponding to $x$ has a simple image.
\item[b)]
$F$ has a simple submodule if and only if $F(X)$ has a universally minimal element for some $X\in \mathfrak{m}$.
\end{itemize}
\end{proposition}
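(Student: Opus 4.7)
The plan is to exploit the Yoneda correspondence throughout: an element $x\in F(X)$ corresponds bijectively to a natural transformation $\varphi_x\colon (X,-)\rightarrow F$ given by $\varphi_x(Y)(g)=F(g)(x)$, and its image $G_x\subseteq F$ is the subfunctor $G_x(Y)=\{F(g)(x)\mid g\in\Hom(X,Y)\}$. Statements about universally minimal elements will be translated into statements about $\varphi_x$ and $G_x$.

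For the forward direction of (a), assume $x$ is universally minimal. First I rule out nontrivial decompositions of $X$: if $X=X_1\oplus X_2$ with both summands nonzero, projections $\pi_i$, and inclusions $\iota_i$, then neither $\pi_i$ is a section, so universal minimality forces $F(\pi_i)(x)=0$; additivity of $F$ together with $\iota_1\pi_1+\iota_2\pi_2=1_X$ then give $x=0$, a contradiction. Hence $X$ is indecomposable. To show $G_x$ is simple, take any nonzero subfunctor $H\subseteq G_x$ and a nonzero element $y=F(g)(x)\in H(Y)$; universal minimality forces $g$ to be a section, say $hg=1_X$, whence $x=F(h)(y)\in H(X)$ and $H=G_x$.

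For the backward direction of (a), assume $X$ is indecomposable and $G_x$ is simple, and let $g\colon X\rightarrow Y$ be a non-section. The subfunctor of $G_x$ generated by $F(g)(x)$ is either zero or, by simplicity, all of $G_x$. In the latter case one obtains $h\colon Y\rightarrow X$ with $F(hg)(x)=x$; since $\Endd(X)$ is local (by Krull--Schmidt for $\mathfrak{m}\subseteq\modd$-$\Lambda$), either $hg$ is invertible, whence $g$ is a section, or $1_X-hg$ is invertible, in which case applying its inverse to the relation $F(1_X-hg)(x)=0$ yields $x=0$; both options contradict the hypotheses, so $F(g)(x)=0$ and $x$ is universally minimal.

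Part (b) follows quickly. The backward direction is immediate from (a). For the forward direction, given a simple submodule $S\subseteq F$, pick $X\in\mathfrak{m}$ and a nonzero $x\in S(X)$, decompose $X=\bigoplus_{i=1}^{r} X_i$ into indecomposables, and write $x=(x_1,\dots,x_r)$ under the induced additive decomposition of $F(X)$; some $x_i$ is nonzero, and the image of $\varphi_{x_i}$ is a nonzero subfunctor of $S$, hence equals $S$, so by (a) the element $x_i$ is universally minimal. The main obstacle is the backward direction of (a): one must promote the abstract simplicity of $G_x$ to the strong pointwise vanishing condition defining universal minimality, and this is exactly where the locality of $\Endd(X)$ enters through the Fitting-style dichotomy.
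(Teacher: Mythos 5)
Your proof is correct; the paper itself gives no argument for this proposition (it is quoted verbatim from Auslander, \emph{Representation theory of Artin algebras II}, Proposition 2.9), and your Yoneda-image argument combined with the locality of $\Endd_\Lambda(X)$ for indecomposable $X$ is essentially Auslander's original proof. Nothing further is needed.
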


\subsection{The main theorem}
In this subsection we prove the main theorem of this section. We begin with the following easy remark.
\begin{remark}\label{remarkc}
Let $(\Lambda,\mathfrak{m})$ be an $n$-homological pair. If $I$ is a directed set and $\{X_i,\varphi_j^i\}_{i\leq j}$ is a direct system in $\mathfrak{m}$ over $I$, since any $Y\in \mathfrak{m}$ is finitely generated we have the following functorial isomorphism of abelian groups
$$\Hom_{\Lambda}(Y,\underrightarrow{\Lim}X_i)\simeq\underrightarrow{\Lim}\Hom_{\Lambda}(Y,X_i).$$
\end{remark}

If $\mathfrak{m}$ is a pure semisimple $n$-cluster tilting subcategory of $\modd$-$\Lambda$, then $\mathcal{M}=\Add(\mathfrak{m})$ is an $n$-cluster tilting subcategory of $\Mod$-$\Lambda$. Let $F:\mathfrak{m}\rightarrow Ab$ be an additive functor, then obviously  we can extend $F$ to the additive functor, also denote by $F$, from $\mathcal{M}$ to the category of abelian groups.

The following lemma is adapted from \cite[Page 5]{Au3}. We give the proof for the convenience of the reader.

\begin{lemma}\label{lemmad}
Let $(\Lambda,\mathfrak{m})$ be an $n$-homological pair, $I$ be a directed set, $\{X_i,\varphi_j^i\}_{i\leq j}$ be a direct system in $\mathfrak{m}$ over $I$ and $F:\mathfrak{m}\rightarrow Ab$ be an $\mathfrak{m}$-module. Then we have a functorial isomorphism
$$F(\underrightarrow{\Lim}X_i)\simeq \underrightarrow{\Lim}F(X_i).$$
\begin{proof}
If we consider $F$ as a functor from $\mathcal{M}=\Add(\mathfrak{m})$ to the category of abelian groups, then $F$ has a projective resolution
$$\oplus_{s\in S}(M_s,-)\rightarrow \oplus_{t\in T}(N_t,-)\rightarrow F\rightarrow 0.$$
By the remark \ref{remarkc}, we have a functorial isomorphism
$$\oplus_{t\in T}(N_t,\underrightarrow{\Lim}X_i)\simeq \oplus_{t\in T}(\underrightarrow{\Lim}(N_t,X_i))\simeq \underrightarrow{\Lim}(\oplus_{t\in T}(N_t,X_i))$$
Thus we have a commutative exact diagram
\begin{center}
\begin{tikzpicture}
\node (X1) at (-4,3) {$0$};
\node (X2) at (0,3) {$0$};
\node (X3) at (-4,1) {$\oplus_{s\in S}(M_s,\underrightarrow{\Lim}X_i)$};
\node (X4) at (0,1) {$\oplus_{t\in T}(N_t,\underrightarrow{\Lim}X_i)$};
\node (X5) at (4,1) {$F(\underrightarrow{\Lim}X_i)$};
\node (X6) at (7,1) {$0$};
\node (X7) at (-4,-1) {$\underrightarrow{\Lim}(\oplus_{s\in S}(M_s,X_i))$};
\node (X8) at (0,-1) {$\underrightarrow{\Lim}(\oplus_{t\in T}(N_t,X_i))$};
\node (X9) at (4,-1) {$\underrightarrow{\Lim}F(X_i)$};
\node (X10) at (7,-1) {$0$};
\node (X11) at (-4,-3) {$0$};
\node (X12) at (0,-3) {$0$};
\draw [->,thick] (X1) -- (X3) node [midway,right] {};
\draw [->,thick] (X2) -- (X4) node [midway,above] {};
\draw [->,thick] (X3) -- (X4) node [midway,above] {};
\draw [->,thick] (X4) -- (X5) node [midway,above] {};
\draw [->,thick] (X5) -- (X6) node [midway,above] {};
\draw [->,thick] (X3) -- (X7) node [midway,above] {};
\draw [->,thick] (X4) -- (X8) node [midway,above] {};
\draw [->,thick] (X5) -- (X9) node [midway,above] {};
\draw [->,thick] (X7) -- (X8) node [midway,above] {};
\draw [->,thick] (X8) -- (X9) node [midway,above] {};
\draw [->,thick] (X9) -- (X10) node [midway,above] {};
\draw [->,thick] (X7) -- (X11) node [midway,above] {};
\draw [->,thick] (X8) -- (X12) node [midway,above] {};
\end{tikzpicture}
\end{center}
Hence the right-hand vertical morphism is an isomorphism, which proves the lemma.
\end{proof}
\end{lemma}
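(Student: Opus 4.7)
The plan is to reduce the assertion to the representable case, where the isomorphism is essentially the content of Remark \ref{remarkc}, by means of a projective presentation of $F$. The underlying principle is that the functor category $\Mod(\mathfrak{m})$ is generated by representables and that filtered direct limits are exact in the category of abelian groups.

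Concretely, I would first choose a projective presentation
$$\oplus_{s\in S}(M_s,-)\longrightarrow \oplus_{t\in T}(N_t,-)\longrightarrow F\longrightarrow 0$$
in $\Mod(\mathfrak{m})$ with $M_s,N_t\in\mathfrak{m}$. The standing pure semisimple hypothesis of this section identifies $\mathcal{M}=\Add(\mathfrak{m})$ with an $n$-cluster tilting subcategory of $\Mod$-$\Lambda$, so each representable $(M_s,-)$ and $(N_t,-)$ extends tautologically via Yoneda to a functor on $\mathcal{M}$, and $F$ extends through the same cokernel prescription. Evaluation of this extended presentation at $\underrightarrow{\Lim}X_i$ then yields the right exact top row of the comparison diagram.

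Next, I would evaluate the original presentation at each $X_i\in\mathfrak{m}$ to obtain a right exact sequence of abelian groups functorial in $i$. Since filtered direct limits in $\mathrm{Ab}$ are exact and commute with arbitrary direct sums, applying $\underrightarrow{\Lim}_i$ produces the right exact bottom row. By Remark \ref{remarkc} applied to the finitely generated modules $M_s$ and $N_t$, and by interchanging $\oplus$ with $\underrightarrow{\Lim}$, the two leftmost vertical comparison maps in the resulting diagram are isomorphisms. The five lemma, or equivalently the universal property of cokernels, then forces the third vertical map to be an isomorphism, and this map is by construction the asserted natural morphism $F(\underrightarrow{\Lim}X_i)\to \underrightarrow{\Lim}F(X_i)$; its functoriality is inherited from that of the preceding ingredients.

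The only real subtlety, and the main conceptual obstacle, is to justify the very notation $F(\underrightarrow{\Lim}X_i)$: the colimit $\underrightarrow{\Lim}X_i$ need not lie in $\mathfrak{m}$, so $F$ must be extended beyond its original domain. This is exactly what pure semisimplicity buys us, since it provides the enlarged $n$-cluster tilting subcategory $\mathcal{M}=\Add(\mathfrak{m})$ on which $F$ lives coherently (via the chosen projective presentation) and on which the direct limit can be meaningfully evaluated.
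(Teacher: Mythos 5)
Your proof is correct and follows essentially the same route as the paper: a projective presentation of $F$ by coproducts of representables, Remark \ref{remarkc} together with the commutation of $\oplus$ and $\underrightarrow{\Lim}$ to handle the two projective terms, and a comparison of the two right-exact rows to conclude that the induced map $F(\underrightarrow{\Lim}X_i)\to \underrightarrow{\Lim}F(X_i)$ is an isomorphism. The paper extends $F$ to $\mathcal{M}=\Add(\mathfrak{m})$ by exactly the same cokernel device, so your handling of the domain issue matches its argument as well.
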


We need the following well known technical lemma.

\begin{lemma}\label{lemmae}$($\cite[Lemma 5.30]{Ro}$)$
Let $R$ be an arbitrary ring, $\{Y_i,\varphi_j^i\}_{i\leq j}$ be a direct system of left $R$-modules over a directed set $I$ and $\lambda_i:Y_i\rightarrow \underrightarrow{\Lim}Y_i$ be morphisms in the construction of direct limit. For any $y_i\in Y_i$ we have that $\lambda_i(y_i)=0$ if and only if $\varphi_t^i(y_i)=0$ for some $i\leq t$.
\end{lemma}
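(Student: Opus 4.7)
The plan is to work directly from the standard construction of the direct limit of $R$-modules. Realize $\underrightarrow{\Lim}Y_i$ as the quotient $\bigl(\bigoplus_{j\in I} Y_j\bigr)/S$, where $S$ is the submodule generated by all elements of the form $\iota_a(y) - \iota_b(\varphi_b^a(y))$ with $a\leq b$ in $I$ and $y\in Y_a$, and $\iota_j:Y_j\to \bigoplus_{k\in I} Y_k$ denotes the canonical inclusion. The structural map $\lambda_i$ is then the composition of $\iota_i$ with the canonical projection, and the compatibility $\lambda_i=\lambda_t\circ \varphi_t^i$ holds for all $i\leq t$.

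For the ``if'' direction, suppose $\varphi_t^i(y_i)=0$ for some $t\geq i$. Then immediately $\lambda_i(y_i)=\lambda_t(\varphi_t^i(y_i))=\lambda_t(0)=0$, so this direction is essentially a one-line consequence of the compatibility relations.

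For the ``only if'' direction, assume $\lambda_i(y_i)=0$, which means $\iota_i(y_i)\in S$. Write this as a finite sum
\[
\iota_i(y_i)=\sum_{k=1}^{n} r_k\bigl(\iota_{a_k}(z_k)-\iota_{b_k}(\varphi_{b_k}^{a_k}(z_k))\bigr)
\]
with $a_k\leq b_k$ in $I$, $z_k\in Y_{a_k}$, and $r_k\in R$. Using directedness of $I$, pick a single $t\in I$ with $t\geq i$ and $t\geq b_k$ for all $k$. Applying the homomorphism $\bigoplus_{j\in I} Y_j \to Y_t$ that sends each $\iota_j(y)$ (for $j\leq t$) to $\varphi_t^j(y)$ and annihilates all other components, every bracket on the right-hand side collapses via the functorial identity $\varphi_t^{a_k}=\varphi_t^{b_k}\circ \varphi_{b_k}^{a_k}$, while the left-hand side maps to $\varphi_t^i(y_i)$. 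Hence $\varphi_t^i(y_i)=0$ in $Y_t$, as required.

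The only subtle point is ensuring that the relations in the generating set for $S$ all share a common upper index, so that one can push everything forward into a single module $Y_t$; this is exactly where directedness of $I$ is used. Since this is a classical bookkeeping argument (and the statement is quoted from \cite{Ro}), no conceptual obstacle remains once the construction of $\underrightarrow{\Lim}$ and the homomorphism into $Y_t$ are set up correctly.
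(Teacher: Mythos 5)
Your argument is correct: the paper does not prove this lemma at all (it is quoted verbatim from Rotman's Lemma 5.30), and your proof is precisely the standard one from that source --- realize $\underrightarrow{\Lim}Y_i$ as $\bigl(\bigoplus_{j}Y_j\bigr)/S$, use directedness to find a single $t$ dominating all indices occurring in a finite expression of $\iota_i(y_i)$ as a combination of relators, and push the whole equation into $Y_t$ via the componentwise map $\iota_j(y)\mapsto\varphi_t^j(y)$. All the delicate points (well-definedness of that map out of the coproduct, the need for $t\geq b_k\geq a_k$, the collapse of each relator via $\varphi_t^{a_k}=\varphi_t^{b_k}\circ\varphi_{b_k}^{a_k}$) are handled correctly, so there is nothing to add.
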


The following lemma is the key step for proving the main theorem of this section.

\begin{lemma}\label{keylemma}
Let $(\Lambda,\mathfrak{m})$ be a pure semisimple $n$-homological pair. If $F:\mathfrak{m}\rightarrow Ab$ is a nonzero additive functor, then $F$ has a simple subfunctor.
\begin{proof}
By the proposition \ref{theoremb} it is enough to show that there is an object $Z$ in $\mathfrak{m}$ and a universally minimal element $z\in F(Z)$. Since $F$ is a nonzero functor, there is an indecomposable object $X$ in $\mathfrak{m}$ such that $F(X)\neq 0$. Choose a nonzero element $x\in F(X)$. We show that there is a morphism $h:X\rightarrow Z$ such that $F(h)(x)$ is universally minimal in $F(Z)$.
Consider the following set
$$\Omega=\{f:X\rightarrow Y \mid \text{$Y$ is an indecomposable object in $\mathfrak{m}$ and $F(f)(x)\neq 0$} \}.$$
Let $f_1:X\rightarrow Y_1$ and $f_2:X\rightarrow Y_2$ be two elements of $\Omega$. We define the following relation in $\Omega$:
$$f_1\preceq f_2 \Longleftrightarrow \exists g:Y_1\rightarrow Y_2 \;\text{such that}\; f_2=gf_1.$$
It is easy to check that $\preceq$ is a partial order relation. We show that $\Omega$ satisfies the assumptions of the Zorn's lemma. First $\Omega \neq \emptyset$ because $\Id_X \in \Omega$. Now assume that $\{f_i:X\rightarrow Y_i \}_{i\in I}$ is a chain in $\Omega$. Put $Y=\underrightarrow{\Lim}Y_i$. Since $\mathfrak{m}$ is pure semisimple, there is a family of indecomposable objects $\{Z_j \}_{j\in J}$ in $\mathfrak{m}$ such that $Y\simeq \oplus_{j\in J}Z_j$. By the lemma \ref{lemmad}, $F$ commute with direct limit and especially with direct sum. Then we have an isomorphism
$$F(Y)\simeq \underrightarrow{\Lim}_{i\in I}F(Y_i)\simeq \oplus_{j\in J} F(Z_j).$$
Consider the following direct limit diagram
\begin{center}
\begin{tikzpicture}
\node (X1) at (-2,0) {$X$};
\node (X2) at (0,1) {$Y_s$};
\node (X3) at (0,-1) {$Y_t$};
\node (X4) at (3,0) {$\underrightarrow{\Lim}_{i\in I}Y_i\simeq \oplus_{j\in J} Z_j$};
\draw [->,thick] (X1) -- (X2) node [midway,above] {$f_s$};
\draw [->,thick] (X1) -- (X3) node [midway,below] {$f_t$};
\draw [->,thick] (X2) -- (X3) node [midway,right] {$f_{s,t}$};
\draw [->,thick] (X2) -- (X4) node [midway,above] {$\lambda_s$};
\draw [->,thick] (X3) -- (X4) node [midway,below] {$\lambda_t$};
\end{tikzpicture}
\end{center}
Applying $F$ we have a direct limit diagram
\begin{center}
\begin{tikzpicture}
\node (X1) at (-4,0) {$F(X)$};
\node (X2) at (0,2) {$F(Y_s)$};
\node (X3) at (0,-2) {$F(Y_t)$};
\node (X4) at (4,0) {$\oplus_{j\in J} F(Z_j)$};
\draw [->,thick] (X1) -- (X2) node [midway,above] {$F(f_s)$};
\draw [->,thick] (X1) -- (X3) node [midway,below] {$F(f_t)$};
\draw [->,thick] (X2) -- (X3) node [midway,right] {$F(f_{s,t})$};
\draw [->,thick] (X2) -- (X4) node [midway,above] {$F(\lambda_s)$};
\draw [->,thick] (X3) -- (X4) node [midway,below] {$F(\lambda_t)$};
\end{tikzpicture}
\end{center}
For every $i\in I$, set $y_i=F(f_i)(x)\in F(Y_i)$. By properties of direct limit for every $s,t\in I$ we know that $F(\lambda_s)(y_s)=F(\lambda_t)(y_t)$. Put $z=F(\lambda_s)(y_s)=F(\lambda_t)(y_t)$. By the lemma \ref{lemmae}, $z$ is a nonzero element of $\oplus_{j\in J} F(Z_j)$. Thus there is at least one $j\in J$ such that $F(p_j)(z)\neq 0$, where $p_j:\oplus_{j\in J} F(Z_j)\rightarrow Z_j$ is the canonical projection. Now we set $h=p_jg:X\rightarrow Z_j$, where $g=\lambda_sf_s$ for some $s\in I$. It is easy to check that $F(h)(x)\neq 0$, and $h:X\rightarrow Z_j$ is an upper bound for the chain $\{f_i:X\rightarrow Y_i \}_{i\in I}$. Thus $\Omega$ satisfies the assumptions of the Zorn's lemma. We choose the maximal element $f:X\rightarrow Z$ of $\Omega$, then $F(f)(x)$ is a universally minimal element in $F(Z)$.
\end{proof}
\end{lemma}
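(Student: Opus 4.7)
The strategy is to find, via Zorn's lemma, a morphism $f: X \to Z$ from an indecomposable $X \in \mathfrak{m}$ with $F(X) \neq 0$ such that $F(f)(x)$ is a universally minimal element; Proposition \ref{theoremb} then yields the desired simple subfunctor of $F$. I would begin by choosing an indecomposable $X \in \mathfrak{m}$ with $F(X) \neq 0$ (which exists since $F \neq 0$ and every object of $\mathfrak{m}$ decomposes into a finite sum of indecomposables by Krull--Schmidt in $\modd$-$\Lambda$) and a nonzero $x \in F(X)$. Form the set
$$\Omega = \{f : X \to Y \mid Y \in \mathfrak{m} \text{ indecomposable and } F(f)(x) \neq 0\}$$
preordered by $f_1 \preceq f_2$ iff $f_2 = g f_1$ for some $g$. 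Since $\id_X \in \Omega$, this is nonempty.

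The main obstacle is producing an upper bound for a chain $\{f_i : X \to Y_i\}_{i \in I}$, and this is precisely where pure semisimplicity enters. Form $Y = \underrightarrow{\Lim}\, Y_i$ in $\Mod$-$\Lambda$. Each $W \in \mathfrak{m}$ is finitely presented, so $\Ext^i_\Lambda(W,-)$ commutes with directed colimits; combined with pure semisimplicity (Theorem \ref{theorem1} implies every object of $\mathcal{M} := \Add(\mathfrak{m})$ is a direct sum of objects in $\mathfrak{m}$), this forces $Y \in \mathcal{M}$, and Krull--Schmidt in $\modd$-$\Lambda$ then gives a decomposition $Y \cong \bigoplus_{j \in J} Z_j$ with each $Z_j$ indecomposable in $\mathfrak{m}$. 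Extending $F$ canonically to $\mathcal{M}$ and applying Lemma \ref{lemmad} yields $F(Y) \cong \underrightarrow{\Lim}\, F(Y_i) \cong \bigoplus_j F(Z_j)$. The compatible family $y_i = F(f_i)(x) \in F(Y_i)$ assembles into an element $z \in F(Y)$ that is nonzero by Lemma \ref{lemmae}, so some component $F(p_{j_0})(z) \neq 0$, and the composition $p_{j_0}\lambda_s f_s : X \to Z_{j_0}$ is the desired upper bound of the chain in $\Omega$.

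Zorn then supplies a maximal element $f : X \to Z$, and I would verify universal minimality of $F(f)(x)$ in $F(Z)$ directly. For any $g : Z \to W$ in $\mathfrak{m}$ that is not a section, decompose $W = \bigoplus_k W_k$ into indecomposables and suppose for contradiction that $F(p_k g f)(x) \neq 0$ for some $k$. Then $p_k g f \in \Omega$ and $f \preceq p_k g f$ via $p_k g$, so maximality yields $h : W_k \to Z$ with $h p_k g f = f$, equivalently $(1_Z - h p_k g)f = 0$. Since $Z$ is indecomposable, $\Endd(Z)$ is local: if $h p_k g \in \mathcal{J}(Z,Z)$ then $1_Z - h p_k g$ is a unit, forcing $f = 0$ and contradicting $F(f)(x) \neq 0$; otherwise $h p_k g$ is a unit, exhibiting $p_k g$ (and hence $g$) as a section, contradicting the assumption on $g$. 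Hence $F(g f)(x) = 0$, so $F(f)(x)$ is universally minimal, and Proposition \ref{theoremb} delivers the required simple subfunctor of $F$.
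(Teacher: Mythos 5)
Your proof is correct and follows essentially the same route as the paper's: the same Zorn's-lemma argument on $\Omega$, with the upper bound for a chain obtained by passing to the direct limit, using pure semisimplicity to decompose it into indecomposables of $\mathfrak{m}$, and invoking Lemmas \ref{lemmad} and \ref{lemmae}. The only difference is that you spell out two steps the paper leaves implicit --- why $\underrightarrow{\Lim}Y_i$ lies in $\Add(\mathfrak{m})$, and the verification that a maximal element of $\Omega$ yields a universally minimal element via the locality of $\Endd(Z)$ --- and both of these are carried out correctly.
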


For the proof of the next theorem we need the following lemma.

\begin{lemma}\label{lemmaz}
Let $(\Lambda,\mathfrak{m})$ be an $n$-homological pair. Then
\begin{itemize}
\item[a)]
If $0\rightarrow M_1\rightarrow M_2\rightarrow M_3 \rightarrow 0$ is an exact sequence of $\mathfrak{m}$-modules with $M_2$ locally finite, then $M_1$ and $M_3$ are both locally finite.
\item[b)]
An $\mathfrak{m}$-module $F$ is finite if and only if for each indecomposable object $X\in \mathfrak{m}$, $F(X)$ is a finite $\Endd_{\Lambda}(X)$-module, and $F(X)=0$ for all but a finite number of indecomposables $X\in \mathfrak{m}$.
\end{itemize}
\begin{proof}
See the proposition 1.9 and the theorem 2.12 of \cite{Au2}.
\end{proof}
\end{lemma}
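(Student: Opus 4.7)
The plan for part (a) is to use that finiteness (being both noetherian and artinian) is preserved under subobjects and quotients in the abelian category $\Mod(\mathfrak{m})$, and then transfer finitely generated subfunctors across the given exact sequence. Every finitely generated subfunctor of $M_1$ is automatically a finitely generated subfunctor of $M_2$, hence finite by hypothesis. For $M_3$, given a finitely generated $N_3\subseteq M_3$, I will pick an epimorphism $(X,-)\twoheadrightarrow N_3$ with $X\in \mathfrak{m}$; projectivity of $(X,-)$ lifts the composite $(X,-)\to M_3$ through $M_2\to M_3$, and the image $N_2\subseteq M_2$ of the lift is a finitely generated (hence finite) subfunctor of $M_2$ that surjects onto $N_3$, whence $N_3$ is finite as a quotient of a finite functor.

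Part (b) will rest on the description of simple $\mathfrak{m}$-modules recalled earlier: every simple is $(X,-)/\mathcal{J}(X,-)$ for an indecomposable $X\in \mathfrak{m}$. The complementary fact I will need, immediate from locality of $\Endd_\Lambda(X)$, is that $\mathcal{J}_{\mathfrak{m}}(X,Y)=\Hom_\Lambda(X,Y)$ whenever $X,Y$ are non-isomorphic indecomposables (any composition $X\to Y\to X$ is non-invertible, hence lies in the radical of $\Endd_\Lambda(X)$); consequently $(X,-)/\mathcal{J}(X,-)$ vanishes on every indecomposable $Y\not\cong X$.

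For the ``only if'' direction in (b), a finite $F$ admits a finite composition series whose factors are simples $(X_i,-)/\mathcal{J}(X_i,-)$; since each factor is supported only at $X_i$, the support of $F$ on indecomposables lies in a finite set $\{X_1,\ldots,X_m\}$, and evaluating the composition series at each $X_i$ yields a filtration of $F(X_i)$ whose successive quotients are the residue field of $\Endd_\Lambda(X_i)$, so $F(X_i)$ has finite length. For the ``if'' direction, the key step is to observe that a subfunctor $G\subseteq F$ is determined by the tuple $(G(X_1),\ldots,G(X_m))$: additivity of $F$ together with Krull--Schmidt in $\mathfrak{m}$ gives $G(Y)=\bigoplus_j G(Y_j)$ for any decomposition $Y=\bigoplus_j Y_j$ into indecomposables, and the summands $G(Y_j)$ are forced to be zero whenever $Y_j$ lies outside the support. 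Any ascending or descending chain of subfunctors of $F$ then induces chains of $\Endd_\Lambda(X_i)$-submodules of each $F(X_i)$ which must stabilize by hypothesis, so $F$ is both noetherian and artinian.

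The main obstacle I anticipate is in the ``if'' direction of (b): rigorously verifying that the assignment $G\mapsto (G(X_1),\ldots,G(X_m))$ is injective and order-preserving on subfunctors of $F$ requires fully exploiting the naturality of subfunctor inclusions against the additive action of $F$ on morphisms between decomposable objects of $\mathfrak{m}$, which is somewhat delicate but becomes routine once the Krull--Schmidt structure of $\mathfrak{m}$ is in hand.
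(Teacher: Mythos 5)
Your argument is correct. Note, however, that the paper does not actually prove this lemma: it simply cites Proposition~1.9 and Theorem~2.12 of Auslander's \emph{Representation theory of Artin algebras II} (reference [Au2]), and what you have written is essentially a faithful reconstruction of the standard arguments found there. In part (a), the treatment of $M_1$ is immediate, and your lifting argument for $M_3$ (lift $(X,-)\twoheadrightarrow N_3\hookrightarrow M_3$ through $M_2\twoheadrightarrow M_3$ using projectivity of $(X,-)$, then observe that the image of the lift is a finitely generated, hence finite, subfunctor of $M_2$ surjecting onto $N_3$) is exactly the right mechanism. In part (b), your two supporting observations are the correct ones: that $(X,-)/\mathcal{J}(X,-)$ vanishes on every indecomposable $Y\not\cong X$ because $\mathcal{J}(X,Y)=\Hom_\Lambda(X,Y)$ for non-isomorphic indecomposables with local endomorphism rings, and that a subfunctor of an additive functor is additive, so that a subfunctor of $F$ is determined by its values on the finitely many indecomposables in the support of $F$. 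Two small points worth making explicit if you write this up in full: the ``only if'' direction uses that an object of an abelian category which is both noetherian and artinian admits a finite composition series (artinian gives a simple subobject, noetherian bounds the length), and in the ``if'' direction the stabilization of an ascending or descending chain $\{G_k\}$ follows because the finitely many chains $\{G_k(X_i)\}$ in the finite-length $\Endd_\Lambda(X_i)$-modules $F(X_i)$ stabilize simultaneously and the assignment $G\mapsto(G(X_1),\dots,G(X_m))$ is injective and order-preserving. Neither point presents any difficulty, and your proof is complete in outline.
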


The following theorem is a higher dimensional analogue of the theorem 3.1 of \cite{Au2}. Note that for the technical reasons we work with the covariant functors instead of the contravariant functors.

\begin{theorem}\label{theoremf}
Let $(\Lambda,\mathfrak{m})$ be an $n$-homological pair. The following statements are equivalent.
\begin{itemize}
\item[a)]
$\Mod(\mathfrak{m})$ is locally finite.
\item[b)]
$(X,-)$ is finite for each $X$ in $\mathfrak{m}$.
\item[c)]
$(S,-)$ is a finite object of $\Mod(\mathfrak{m})$ for each simple object $S$ in $\modd$-$\Lambda$.
\item[d)]
$(\Lambda,\mathfrak{m})$ is of finite type.
\end{itemize}
\begin{proof}
a) $\Longrightarrow$ b). Follows by the proposition \ref{prop1}.

b) $\Longrightarrow$ c). Let $f:S\rightarrow M$ be a left $\mathfrak{m}$-approximation of $S$. Thus we have an exact sequence $(M,-)\rightarrow (S,-)\rightarrow 0$. Since $(M,-)$ is finite, $(S,-)$ is also finite by the lemma \ref{lemmaz}.

c) $\Longrightarrow$ d). Let $\{S_1, \ldots, S_t\}$ be a complete set of non-isomorphic simple $\Lambda$-modules. Because each nonzero $\Lambda$-module has a simple submodule, we know that for any $X$ in $\mathfrak{m}$, $(\oplus_{i=1}^tS_i,X)=0$ implies that $X=0$. In particular, $(\oplus_{i=1}^tS_i,X)\neq 0$ for each indecomposable object $X$ in $\mathfrak{m}$. Since each $(S_i,-)$ is a finite $\mathfrak{m}$-module, it follows that $\oplus_{i=1}^t(S_i,-)=(\oplus_{i=1}^tS_i,-)$ is a finite $\mathfrak{m}$-module. Thus by the lemma \ref{lemmaz}, there is only a finite number $X_1, \ldots, X_k$ of non-isomorphic indecomposable objects in $\mathfrak{m}$ such that $(\oplus_{i=1}^tS_i,X)\neq 0$. Therefore $\{X_1, \ldots, X_k\}$ is a complete set of non-isomorphic indecomposable objects in $\mathfrak{m}$ and $(\Lambda,\mathfrak{m})$ is of finite type.

d) $\Longrightarrow$ a). Since $(\Lambda,\mathfrak{m})$ is of finite type, by the corollary \ref{firstresult}, $(\Lambda,\mathfrak{m})$ is pure semisimple. Thus by the lemma \ref{keylemma} each nonzero functor $F:\mathfrak{m}\rightarrow Ab$ has a simple subfunctor. By \cite[3.3.1]{I1}, $\mathfrak{m}$ has left almost split morphisms and so by the lemma \ref{lemmaa} each simple functor in $\Mod(\mathfrak{m})$ is finitely presented. Therefore $\Mod(\mathfrak{m})$ is locally finite by the proposition \ref{prop1}.
\end{proof}
\end{theorem}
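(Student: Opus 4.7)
The plan is to prove a cycle of implications $a)\Rightarrow b)\Rightarrow c)\Rightarrow d)\Rightarrow a)$, using the machinery already assembled: Proposition \ref{prop1}, the key Lemma \ref{keylemma}, and the Corollary \ref{firstresult} from the previous section.

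The implication $a)\Rightarrow b)$ is immediate from Proposition \ref{prop1}. For $b)\Rightarrow c)$, I would take a simple $\Lambda$-module $S\in \modd$-$\Lambda$ and a left $\mathfrak{m}$-approximation $f:S\rightarrow M$. Applying $\Hom_{\Lambda}(-,X)$ for $X\in\mathfrak{m}$ shows that the induced map $(M,-)\rightarrow (S,-)$ is an epimorphism in $\Mod(\mathfrak{m})$. Since $M\in\mathfrak{m}$, the functor $(M,-)$ is finite by hypothesis, and by part (a) of Lemma \ref{lemmaz} quotients of locally finite modules are locally finite, so $(S,-)$ is finite.

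For the arithmetic step $c)\Rightarrow d)$, I would let $\{S_1,\dots,S_t\}$ be representatives of the finitely many isomorphism classes of simple left $\Lambda$-modules and form $S=\oplus_{i=1}^{t}S_i$. Because every nonzero $\Lambda$-module admits a simple submodule, $(S,X)\neq 0$ for every nonzero $X\in\mathfrak{m}$, in particular for every indecomposable object. On the other hand, by hypothesis each $(S_i,-)$ is finite, so their finite direct sum $(S,-)=\oplus_{i=1}^{t}(S_i,-)$ is finite. Part (b) of Lemma \ref{lemmaz} then forces $(S,X)=0$ for all but finitely many indecomposables $X\in\mathfrak{m}$. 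Combining these two facts gives only finitely many indecomposables in $\mathfrak{m}$, which is exactly the assertion that $(\Lambda,\mathfrak{m})$ is of finite type.

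The closing implication $d)\Rightarrow a)$ is where the pure-semisimplicity theory developed in the paper does the real work. From $d)$, Corollary \ref{firstresult} yields that $(\Lambda,\mathfrak{m})$ is pure semisimple, which is precisely the hypothesis of Lemma \ref{keylemma}; hence every nonzero $F\in\Mod(\mathfrak{m})$ has a simple subfunctor. Since $\mathfrak{m}$ admits left almost split morphisms by \cite[3.3.1]{I1}, Lemma \ref{lemmaa} tells us every simple object of $\Mod(\mathfrak{m})$ is finitely presented. Both conditions of item $d)$ in Proposition \ref{prop1} are now verified, so $\Mod(\mathfrak{m})$ is locally finite, closing the cycle. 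The main conceptual obstacle in the argument is the passage $d)\Rightarrow a)$, but the heavy lifting there is absorbed into Lemma \ref{keylemma}; on the surface of this theorem the only nontrivial bookkeeping is the counting argument in $c)\Rightarrow d)$.
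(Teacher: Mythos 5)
Your proposal is correct and follows essentially the same route as the paper: the same cycle $a)\Rightarrow b)\Rightarrow c)\Rightarrow d)\Rightarrow a)$, with the same appeals to Proposition \ref{prop1}, Lemma \ref{lemmaz}, Corollary \ref{firstresult}, Lemma \ref{keylemma}, and Lemma \ref{lemmaa} at the same points. There is nothing to add.
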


Now we can prove the main theorem of this section.

\begin{theorem}\label{secondresult}
An $n$-homological pair $(\Lambda,\mathfrak{m})$ is pure semisimple if and only if $(\Lambda,\mathfrak{m})$ is of finite type.
\begin{proof}
The necessary condition follows from the corollary \ref{firstresult}. Now assume that $(\Lambda,\mathfrak{m})$ is pure semisimple. Since each simple $\mathfrak{m}$-module is finitely presented and by the lemma \ref{keylemma} any nonzero $\mathfrak{m}$-module has a simple subfunctor, by the proposition \ref{prop1}, $\Mod(\mathfrak{m})$ is locally finite. Then the result follows by the theorem \ref{theoremf}.
\end{proof}
\end{theorem}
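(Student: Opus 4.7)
The forward implication (of finite type $\Rightarrow$ pure semisimple) is already handed to us by Corollary \ref{firstresult}, so essentially all the work is in the converse direction. My plan is to reduce ``pure semisimple'' to ``$\Mod(\mathfrak{m})$ locally finite'' and then apply Theorem \ref{theoremf} to conclude ``of finite type''.

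More concretely, suppose $(\Lambda,\mathfrak{m})$ is pure semisimple. I want to verify condition (d) of Proposition \ref{prop1}, namely that (i) every simple $\mathfrak{m}$-module is finitely presented and (ii) every nonzero $\mathfrak{m}$-module has a simple submodule. Claim (ii) is exactly Lemma \ref{keylemma}, which is the substantive analytic input and has already been proved using the pure semisimplicity hypothesis together with the commutation of $F$ with direct limits (Lemma \ref{lemmad}) and the Zorn/maximal-element construction of a universally minimal element. For (i) I would invoke the classification of simple functors on $\mathfrak{m}$: each simple object of $\Mod(\mathfrak{m})$ has the form $(X,-)/\mathcal{J}(X,-)$ for some indecomposable $X\in\mathfrak{m}$; by Iyama's existence result (\cite[3.3.1]{I1}) such an $X$ admits a left almost split morphism $f\colon X\to Y$, and Lemma \ref{lemmaa} then gives an explicit finite projective presentation $(Y,-)\to (X,-)\to (X,-)/\mathcal{J}(X,-)\to 0$.

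Having (i) and (ii), Proposition \ref{prop1} yields that $\Mod(\mathfrak{m})$ is locally finite, which is condition (a) of Theorem \ref{theoremf}; the equivalence (a)$\Leftrightarrow$(d) of that theorem then delivers exactly that $(\Lambda,\mathfrak{m})$ is of finite type. This closes the loop.

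The only step that genuinely uses pure semisimplicity in a nontrivial way is the existence of a simple subfunctor (Lemma \ref{keylemma}); everything else is formal bookkeeping through Proposition \ref{prop1} and Theorem \ref{theoremf}. So I expect the ``hard part'' here to be essentially zero once Lemma \ref{keylemma} is in hand---the proof is a short two-line chain: pure semisimple $\Rightarrow$ every nonzero functor has a simple subfunctor $\Rightarrow$ $\Mod(\mathfrak{m})$ locally finite $\Rightarrow$ of finite type.
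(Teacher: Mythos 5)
Your proposal is correct and follows exactly the same route as the paper: Corollary \ref{firstresult} for one direction, and for the converse, Lemma \ref{keylemma} plus the finite presentability of simple $\mathfrak{m}$-modules (via left almost split morphisms and Lemma \ref{lemmaa}) feeding into Proposition \ref{prop1} and then Theorem \ref{theoremf}. In fact you spell out the justification for why each simple $\mathfrak{m}$-module is finitely presented slightly more explicitly than the paper does, but the argument is the same.
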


\begin{remark}
Iyama in \cite{I3} asked the following question:
\begin{itemize}
\item[]
Does any $n$-cluster tilting subcategory of $\modd$-$\Lambda$ with $n \geq 2$ have an additive generator?
\end{itemize}
By the theorem \ref{secondresult}, Iyama's question equivalent to the following question:
\begin{itemize}
\item[]
Is any $n$-homological pair with $n \geq 2$ pure semisimple?
\end{itemize}
It is obvious that the positive answer of this question will answer positively the question \ref{ques1}$(i)$.
\end{remark}

Recall that the first Brauer-Thrall conjecture asserts that any
Artin algebra is either representation-finite or there exist
indecomposable modules with arbitrarily large length. Roiter
proved the first Brauer-Thrall conjecture for finite dimensional
algebras \cite{Roi} (see also \cite{R}). Auslander proved the conjecture for Artin algebras using
Auslander-Reiten theory and the Harada-Sai lemma \cite{Au2}.

We say that an $n$-homological pair $(\Lambda,\mathfrak{m})$ is of bounded length if the lengths of the finitely generated indecomposable left $\Lambda$-modules which are contained in $\mathfrak{m}$ are bounded.

The following theorem is a higher dimensional analogue of the first Brauer-Thrall conjecture. The proof of the following theorem is an easy adaptation of the proof of the first Brauer-Thrall conjecture (see section 2.3 of \cite{R}), so we omit the proof.

\begin{theorem}\label{secondresult1}
An $n$-homological pair $(\Lambda,\mathfrak{m})$ is of finite type if and only if $(\Lambda,\mathfrak{m})$ is of bounded length.
\end{theorem}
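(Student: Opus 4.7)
The forward direction is immediate: if $M$ is an additive generator of $\mathfrak{m}$, then every indecomposable in $\mathfrak{m}$ is a direct summand of $M$, so its length is bounded by the (finite) length of $M$.

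For the converse the plan is to adapt Auslander's classical proof of the first Brauer--Thrall conjecture (compare \cite{Au2} and section 2.3 of \cite{R}). Suppose the indecomposables of $\mathfrak{m}$ have length bounded by some $b$. First, apply the Harada--Sai lemma inside $\modd$-$\Lambda$ to conclude that any composition of $2^b-1$ non-isomorphisms between indecomposable $\Lambda$-modules of length at most $b$ vanishes; in particular $\mathcal{J}_{\mathfrak{m}}^{2^b-1}(X,Y)=0$ for all indecomposables $X,Y\in\mathfrak{m}$.

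Next, invoke Iyama's higher Auslander--Reiten theory \cite[Section 3.3.1]{I1}, which provides a left almost split morphism at every indecomposable object of $\mathfrak{m}$. Starting from the (finitely many) indecomposable summands of $\Lambda\in\mathfrak{m}$, iteratively adjoin the indecomposable summands of the codomains of these left almost split morphisms. By the Harada--Sai bound the iteration stabilizes after at most $2^b-1$ rounds, and since at each step only finitely many new indecomposables are added (summands of a single finitely generated module), the resulting set $\mathcal{S}$ of indecomposables is finite. To conclude, show that $\mathcal{S}$ exhausts the indecomposables of $\mathfrak{m}$: for any indecomposable $X\in\mathfrak{m}$, the projective generator $\Lambda$ admits a nonzero morphism to $X$; applying the universal property of left almost split morphisms inductively yields a factorization of this morphism through objects of $\mathcal{S}$, and Harada--Sai combined with $X\notin\mathcal{S}$ would force the factorization to vanish, a contradiction. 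Hence $\mathfrak{m}$ has only finitely many indecomposables and therefore admits an additive generator.

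The main obstacle is the last connectedness step -- verifying that every indecomposable is reachable from the summands of $\Lambda$ through chains of left almost split morphisms. In the classical setting this is handled using both left and right almost split morphisms, i.e.\ full AR sequences, together with connectedness of $\Lambda$; in the $n$-cluster tilting context, the $n$-almost split sequences provided by Iyama supply the analogous structure on both sides, so the classical argument transports essentially verbatim. This is precisely why the author describes the proof as an easy adaptation and omits the details.
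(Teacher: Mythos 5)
Your plan is correct and is precisely the argument the paper has in mind: the paper omits the proof, describing it as an easy adaptation of Auslander's Harada--Sai/almost-split-morphism proof of the first Brauer--Thrall conjecture, which is exactly what you carry out (bounded length gives the nilpotence bound $2^b-1$ on compositions of non-isomorphisms, left almost split morphisms in $\mathfrak{m}$ exist by Iyama, and every indecomposable $X\in\mathfrak{m}$ is reached from the finitely many indecomposable summands of $\Lambda$ in at most $2^b-1$ steps, since otherwise a nonzero map $P\rightarrow X$ would be forced to vanish). One small remark: your closing worry about needing right almost split morphisms and connectedness of $\Lambda$ is unfounded --- the factorization argument you already gave, pushing a nonzero morphism from an indecomposable projective summand of $\Lambda$ to $X$ through iterated left almost split morphisms until Harada--Sai annihilates it, is complete as stated and needs nothing further.
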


Now we summarize our results in the following corollary.

\begin{corollary} Let $(\Lambda,\mathfrak{m})$ be an $n$-homological pair. The following statements are equivalent.
\begin{itemize}
\item[1)]
$(\Lambda,\mathfrak{m})$ is pure semisimple.
\item[2)]
$(\Lambda,\mathfrak{m})$ is of finite type.
\item[3)]
$(\Lambda,\mathfrak{m})$ is of bounded length.
\item[4)]
$\Mod(\mathfrak{m})$ is locally finite.
\item[5)]
$(X,-)$ is finite for each $X$ in $\mathfrak{m}$.
\item[6)]
$(S,-)$ is a finite object of $\Mod(\mathfrak{m})$ for each simple object $S$ in $\modd$-$\Lambda$.

\end{itemize}
\end{corollary}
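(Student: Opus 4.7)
The plan is simply to assemble this corollary from the three main theorems already proved in the paper, since each of the listed equivalences is one that has been established earlier. Concretely, the equivalence of (1) and (2) is the content of Theorem \ref{secondresult}, the equivalence of (2) and (3) is Theorem \ref{secondresult1}, and Theorem \ref{theoremf} supplies the equivalences of (2), (4), (5) and (6) (with condition (d) there being exactly our condition (2)). So the proof is essentially a chain of citations plus the observation that equivalence of binary relations is transitive.

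The order I would write the proof in: first invoke Theorem \ref{theoremf} to collapse (2), (4), (5), (6) into a single class; then use Theorem \ref{secondresult} to attach (1) to this class; then use Theorem \ref{secondresult1} to attach (3). No new computation is needed, and no additional lemmas are required.

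The only point to be mildly careful about is checking that the roles of the conditions really match up. In Theorem \ref{theoremf}, "of finite type" (i.e.\ $\mathfrak{m}$ has an additive generator) is the pivot condition (d); in Theorem \ref{secondresult}, the pivot is again "of finite type"; and in Theorem \ref{secondresult1}, the pivot is once more "of finite type". Thus the statement of finite type is the common hub through which all six conditions are linked, and the transitivity of logical equivalence closes the argument. I do not expect any obstacle, since the mathematical content has been fully carried out in the earlier theorems of this section.
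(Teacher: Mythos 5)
Your proposal is correct and matches the paper exactly: the corollary is stated there as a summary with no separate proof, precisely because it follows by chaining Theorem \ref{secondresult} (1 $\Leftrightarrow$ 2), Theorem \ref{secondresult1} (2 $\Leftrightarrow$ 3), and Theorem \ref{theoremf} (2 $\Leftrightarrow$ 4 $\Leftrightarrow$ 5 $\Leftrightarrow$ 6). Nothing further is needed.
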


\section*{acknowledgements}
The authors would like to thank the referee for a careful reading of this paper and making helpful suggestions that improved the presentation of the paper. Also we would like to thank Daniel Simson for his comments on the earlier version of this paper. The research of the second
author was in part supported by a grant from IPM (No. 98170412).


\begin{thebibliography}{10}

\bibitem{Au} \textsc{M. Auslander}, Coherent functors, In \emph{Proceedings Conference Categorical Algebra
}, La Jolla, CA, 1965, Springer, New York, 1966, pp. 189--231.

\bibitem{Au3} \textsc{M. Auslander}, Large modules over Artin algebras, \emph{Algebra, Topology and Category Theory}, Academic Press, New York, 1976, pp. 1--17.

\bibitem{Au1} \textsc{M. Auslander}, Representation theory of Artin algebras I, \emph{Comm. Algebra} \textbf{1} (1974), 177--268.

\bibitem{Au2} \textsc{M. Auslander}, Representation theory of Artin algebras II, \emph{Comm. Algebra } \textbf{1} (1974), 269--310.

\bibitem{ARS} \textsc{M. Auslander, I. Reiten and S. O. Smal{\o}}, \emph{Representation theory of Artin algebras}, Cambridge studies in advanced mathematics \textbf{36}, Cambridge University Press, 1995.

\bibitem{AS} \textsc{M. Auslander and S. O. Smal{\o}}, \emph{Preprojective modules over Artin algebras}, \emph{J. Algebra}, \textbf{66}(1) (1980), 61--122.

\bibitem{GKO} \textsc{C. Geiss, B. Keller and S. Oppermann}, n-angulated categories, \emph{J. Reine Angew. Math.}, \textbf{675} (2013), 101--120.

\bibitem{HI1} \textsc{M. Herschend and O. Iyama}, n-representation finite algebras and twisted fractionally
Calabi-Yau algebras, \emph{Bull. London Math. Soc.}, \textbf{43} (2011), 449--466.

\bibitem{HI2} \textsc{M. Herschend and O. Iyama}, Selfinjective quivers with potential and 2-representation-finite algebras, \emph{Compositio. Math.}, \textbf{147} (2011), 1885--1920.

\bibitem{HJV} \textsc{M. Herschend, P. Jorgensen and L. Vaso}, Wide subcategories of d-cluster tilting subcategories, preprint (2017). arXiv:1705.02246v1

\bibitem{I2} \textsc{O. Iyama}, Auslander correspondence, \emph{Adv. Math.}, \textbf{210}(1) (2007), 51--82.

\bibitem{I3} \textsc{O. Iyama}, Auslander-Reiten theory revisited, In \emph{Trends in Representation Theory
of Algebras and Related Topics}, {} (2008), 349--398.

\bibitem{I4} \textsc{O. Iyama}, Cluster tilting for higher Auslander algebras, \emph{Adv. Math.}, \textbf{226} (2011), 1--61.

\bibitem{I1} \textsc{O. Iyama}, Higher-dimensional Auslander-Reiten theory on maximal orthogonal subcategories, \emph{Adv. Math.}, \textbf{210}(1) (2007), 22--50.

\bibitem{IO} \textsc{O. Iyama and S. Oppermann,} n-representation-finite algebras and n-APR tilting, \emph{Trans. Amer. Math. Soc.}, \textbf{363}(12) (2011), 6575--6614.

\bibitem{J} \textsc{G. Jasso}, n-Abelian and n-exact Categories, \emph{Math. Z.}, (2016), 1--57.

\bibitem{Jo} \textsc{P. Jorgensen}, Torsion classes and t-structures in higher homological algebra, \emph{Int. Math. Res. Not.}, \textbf{13} (2016), 3880--3905.

\bibitem{KR} \textsc{B. Keller and I. Reiten}, Cluster-tilted algebras are Gorenstein and stably Calabi-Yau, \emph{Adv. Math.}, \textbf{211}(1) (2007), 123--151.

\bibitem{Mu} \textsc{D. Murfet}, \emph{Abelian categories}, Mathmatics Notes, (2006).

\bibitem{R} \textsc{C. M. Ringel}, On algorithms for solving vector space problems. I. Report
on the Brauer-Thrall conjectures: Rojter's theorem and the theorem of
Nazarova and Rojter. \emph{Representation theory, I} (Proc. Workshop, Carleton
Univ., Ottawa, Ont., 1979), pp. 104-136, Lecture Notes in Math., 831,
Springer, Berlin, 1980.

\bibitem{Roi} \textsc{A. V. Roiter}, The unboundedness of the dimension of the indecomposable representations of
algebras that have an infinite number of indecomposable representations, \emph{Izv. Akad. Nauk. SSSR.
Ser. Mat.} \textbf{32} (1968), 1275-1282.

\bibitem{Ro} \textsc{J. J. Rotman}, \emph{An introduction to homological algebra}, Springer, 2008.

\bibitem{S6} \textsc{D. Simson}, An Artin problem for division ring extensions and the pure
semisimplicity conjecture. II, \emph{J. Algebra}, \textbf{227}
(2000), 670--705.

\bibitem{S1} \textsc{D. Simson}, On pure global dimension of locally finitely presented grothendieck categories, \emph{Fund. Math.}, \textbf{96} (1977), 91--116.

\bibitem{S2} \textsc{D. Simson}, On pure semisimple grothendieck categories I, \emph{Fund. Math.}, \textbf{100}(3) (1978), 211--222.

\bibitem{S3} \textsc{D. Simson}, On right pure semisimple hereditary rings and an Artin problem, \emph{J. Pure Appl. Algebra}, \textbf{104}(3) (1995), 313--332.

\bibitem{S4} \textsc{D. Simson}, Pure semisimple categories and rings of finite representation type, \emph{J. Algebra}, \textbf{48} (1977), 290--296.

\bibitem{S5} \textsc{D. Simson}, Pure semisimple categories and rings of finite representation type, Corrigendum, \emph{J. Algebra} \textbf{67} (1980), 254--256.

\bibitem{We} \textsc{C. A. Weibel}, \emph{An introduction to homological algebra}, Cambridge studies in advanced mathematics \textbf{38}, Cambridge University Press, 1994.


\end{thebibliography}
\end{document}